\newtheorem{thm}{Theorem}[section]
\newtheorem{lm}[thm]{Lemma}
\newtheorem{cor}[thm]{Corollary}
\newtheorem{prop}[thm]{Proposition}
\theoremstyle{remark}
\theoremstyle{remark}
\newtheorem{exm}[thm]{Example}
\DeclareMathOperator{\Ker}{\rm Ker}
\DeclareMathOperator{\Coker}{\rm cok}
\DeclareMathOperator{\Img}{\rm Im}
\newcommand{\N}{\mathcal{N}}
\newcommand{\J}{\mathcal{J}}
\newcommand{\Ac}{\mathcal{A}}
\newcommand{\ot}{\overline{t}}
\newcommand{\Bc}{\mathcal{B}}
\newcommand{\Ec}{\mathcal{E}}
\newcommand{\Nc}{\mathcal{N}}
\newcommand{\Uc}{\mathcal{U}}
\newcommand{\Fc}{\mathcal{F}}
\newcommand{\Pc}{\mathcal{P}}
\newcommand{\Ic}{\mathcal{I}}
\newcommand{\Gc}{\mathcal{G}}
\newcommand{\Vc}{\mathcal{V}}
\newcommand{\Cc}{\mathcal{C}}
\newcommand{\M}{\mathcal{M}}
\newcommand{\Sc}{\mathcal{S}}
\newcommand{\Tc}{\mathcal{T}}
\newcommand{\K}{\mathcal{K}}
\newcommand{\wK}{\mathcal{P}}
\newcommand{\onu}{\overline{\nu}}
\newcommand{\ou}{\overline{u}}
\newcommand{\omu}{\overline{\mu}}
\newcommand{\wnu}{\widetilde{\nu}}
\newcommand{\opi}{\overline{\pi}}
\newcommand{\orho}{\overline{\rho}}
\newcommand{\Hom}{\mbox{Hom}}
\newcommand{\Mod}{\mathbf{Mod}}
\newcommand{\im}{\mbox{Im}}
\subjclass[2000]{16D10 (16S50)}
\keywords{additive categories, compact object, Ulam-measurable cardinal}
\begin{document}
\title{Compactness in abelian categories}
\author{Peter K\'alnai}
\address{Department of Algebra, Charles University in Prague,
Faculty of Mathematics and Physics Sokolovsk\' a 83, 186 75 Praha 8, Czech Republic} 
\email{kalnai@karlin.mff.cuni.cz}

\author{Jan \v Zemli\v cka}
\email{zemlicka@karlin.mff.cuni.cz}

\begin{abstract}
We relativize the notion of a compact object in an abelian category with respect to a fixed subclass of objects.
We show that the standard closure properties persist to hold in this case. Furthermore, we describe categorical and set-theoretical conditions under which all products of compact objects remain compact.
\end{abstract}
\date{\today}
\maketitle

An object $C$ of an abelian category $\Ac$ closed under coproducts is said to be {\it compact} if the covariant functor $\Ac(C,-)$ commutes with all direct sums, i.e. there is a canonical isomorphism between $\Ac(C,\bigoplus \mathcal D)$ and $\bigoplus \Ac(C,\mathcal D)$ in the category of abelian groups for every system of objects $\mathcal D$.
The foundations for a systematic study of compact objects in the context of module categories were laid in 60's by Hyman Bass \cite[p.54]{Bass68} in 60's. The introductory work on theory of dually slender modules goes back to Rudolf Rentschler \cite{Ren69} and further research of compact objects has been motivated by progress in various branches of algebra such as theory of representable equivalences of module categories \cite{ColMen93,ColTrl94}, the structure theory of graded rings \cite{GomMilNas94}, and almost free modules \cite{Tr95}.

From the categorically dual point of view discussed in \cite{ElbKep98}, commutativity of the contravariant functor on full module categories behaves a little bit differently. The equivalent characterizations of compactness split in this dual case into a hierarchy of strict implications dependent on the cardinality of commuting families. The strongest hypothesis assumes arbitrary cardinalities and it leads to the class of so called \emph{slim} modules (also known as \emph{strongly slender}), which is a subclass of the most general class of $\aleph_1$-slim modules (also called as \emph{slender}), which involves only commutativity with countable families. The authors proved that the cardinality of a non-zero slim module is greater than or equal to any measurable cardinal (and presence of such cardinality is also sufficient condition for existence of a non-zero slim module) and that the class of slim modules is closed under direct sums. Thus, absence of a measurable cardinal ensures that there is at least one non-zero slim module and in fact, abundance of them. On the other hand, if there is a proper class of measurable cardinals then there is no such object like a non-zero slim module. This motivated the question in the dual setting, namely if the class of compact objects in full module categories (termed also as \emph{dually slender} modules) is closed under direct products. Offering no surprise, set-theoretical assumptions have helped to establish the conclusion also in this case. 

The main objective of the present paper is to refine several results on compactness. The obtained improvement comes from transferring behavior of modules to the context of general abelian categories. In particular we provide a generalized description of classes of compact objects closed under product that was initially exposed for dually slender modules in \cite{KalZem2014}. Our main result Theorem~\ref{CompGenCatsClosedUnderProds} shows that the class of $\Cc$-compact objects of a reasonably generated category is closed under products whenever there is no strongly inaccessible cardinal. Note that this outcome is essentially based on the characterization of non-$\Cc$-compactness formulated in Theorem~\ref{e.c.}. Dually slender and self-small modules (which may be identically translated as self-dually slender) form naturally available instances of compact and self-compact objects (see e.g. \cite{EklGooTrl97} and \cite{Dvo2015}).

From now on, we suppose that $\Ac$ is an abelian category closed under arbitrary coproducts and products. We shall use the terms \emph{family} or \emph{system} for any discrete diagram, which can be formally described as a mapping from a set of indices to a set of objects. For unexplained terminology we refer to \cite{Goo79,Pop73}.

\section{Compact objects in abelian categories}

Let us recall basic categorical notions. A category with a zero object is called \emph{additive} if for every finite system of objects there exist the product and coproduct which are canonically isomorphic, every $\Hom$-set has a structure of abelian groups and the composition of morphisms is bilinear. An additive category is \emph{abelian} if there exists kernel and a cokernel for each morphism, monomorphisms are exactly kernels of some morphisms and epimorphisms cokernels. A category is said to be \emph{complete (cocomplete)} whenever it has all limits (colimits) of small diagrams.
Finally, a cocomplete abelian category where all filtered colimits of exact sequences preserve exactness is \emph{Ab5}. For further details on abelian category see e.g. \cite{Pop73}.

Assume $\M$ is a family of objects in $\Ac$. Throughout the paper, the corresponding coproduct is designated $(\bigoplus\M,(\nu_M \mid M \in \M))$ and the product $(\prod\M,(\pi_M \mid M\in\M))$. We call $\nu_M$ and $\pi_M$ as the \emph{structural morphisms} of the coproduct and the product, respectively. 

Suppose that $\Nc$ is a subfamily of $\M$. We call the coproduct $(\bigoplus\Nc,(\onu_N \mid N\in\Nc))$ in $\Ac$ as the \emph{subcoproduct} and dually the product $(\prod\Nc,(\opi_N \mid N\in\Nc))$ as the \emph{subproduct}. Note that there exist the unique canonical morphisms $\nu_\Nc \in \Ac \left( \bigoplus\Nc, \bigoplus\M \right)$ and $\pi_\Nc \in \Ac \left(\prod\M, \prod\Nc\right)$ given by the universal property of the colimit $\bigoplus\Nc$ and the limit $\prod\Nc$ satisfying $\nu_N=\nu_\Nc \circ \onu_N$ and $\pi_N=\opi_N \circ \pi_\Nc$ for each $N\in\Nc$, to which we refer as the \emph{structural morphisms} of the subcoproduct and the subproduct over a subfamily $\Nc$ of $\M$, respectively. The symbol $1_M$ is used for the identity morphism of an object $M$.

We start with formulation of two introductory lemmas which collects several basic but important properties of
the category $\Ac$ expressing relations between the coproduct and product over a family using their structural morphisms.  
 
\begin{lm}\label{AssMorphAbelianCats} 
Let $\Ac$ be a complete abelian category, $\M$ a family of objects of $\Ac$ with all coproducts and $\Nc\subseteq\M$. Then
\begin{enumerate}
\renewcommand{\labelenumi}{(\roman{enumi})}  
\item There exist unique morphisms $\rho_\Nc \in \Ac(\bigoplus\M, \bigoplus\Nc)$ and $\mu_\Nc \in \Ac(\prod\Nc, \prod\M)$ such that 
$\rho_\Nc \circ \nu_M=\onu_M$, $\pi_M \circ \mu_\Nc=\onu_M$  if $M\in\Nc$ and $\rho_\Nc \circ \nu_M=0$, $\pi_M \circ \mu_\Nc = 0$  if $M\notin\Nc$.  
\item For each $M\in \M$ there exist unique morphisms 
$\rho_M \in \Ac(\bigoplus\M, M)$ and  $\mu_M \in \Ac(M, \prod\M)$ such that $\rho_M \circ \nu_M=1_{M}$, $\pi_M \circ \mu_M=1_M$ and $\rho_M \circ \nu_N=0$, $\pi_N \circ \mu_M=0$ whenever $N\ne M$. If $\orho_M$ and $\omu_M$ denote the corresponding morphisms for $M \in \Nc$, then $\mu_\Nc \circ \omu_N = \mu_N$ and $\rho_\Nc \circ \orho_N = \rho_N$ for all $N \in \Nc$.
\item There exists a unique morphism $t \in \Ac(\bigoplus \M, \prod \M)$
such that $\pi_M \circ t=\rho_M$ and $t \circ \nu_M = \mu_M$ for each $M\in \M$.  
\end{enumerate}
\end{lm}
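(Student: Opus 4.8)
The plan is to extract every one of these morphisms directly from the universal properties of the coproduct $\bigoplus\M$ and the product $\prod\M$, and to verify each stated identity by appealing to the corresponding uniqueness clause: a morphism out of $\bigoplus\M$ is determined by its composites with the injections $\nu_M$, and dually a morphism into $\prod\M$ is determined by its composites with the projections $\pi_M$. In this way I never need an elementwise argument, which is essential since $\Ac$ is an abstract abelian category with no concrete realization at hand.

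For (i), I would apply the universal property of $\bigoplus\M$ to the family $(f_M)_{M\in\M}$ defined by $f_M=\onu_M$ when $M\in\Nc$ and $f_M=0$ when $M\notin\Nc$; the induced morphism is the desired $\rho_\Nc$, and both its existence and its uniqueness come for free. The morphism $\mu_\Nc$ is obtained dually from the universal property of $\prod\M$ applied to the family equal to $\opi_M$ for $M\in\Nc$ and to $0$ otherwise. Statement (ii) is then just the instance $\Nc=\{M\}$ of (i): the coproduct and product over a one-element family are canonically $M$ itself with structural morphisms $1_M$, so $\rho_{\{M\}}=\rho_M$ and $\mu_{\{M\}}=\mu_M$ satisfy exactly the listed relations.

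It remains to prove the two compatibility identities in (ii) and the mixed equation in (iii). For $\orho_N\circ\rho_\Nc=\rho_N$ (both morphisms $\bigoplus\M\to N$), I would precompose with each $\nu_M$: using $\rho_\Nc\circ\nu_M=\onu_M$ or $0$ from (i) and then the defining relations of $\orho_N$, every composite collapses to $1_N$ when $M=N$ and to $0$ otherwise, which is precisely $\rho_N\circ\nu_M$; uniqueness out of $\bigoplus\M$ then forces equality. The identity $\mu_\Nc\circ\omu_N=\mu_N$ is the exact dual, tested against the projections $\pi_M$. For (iii) I would define $t$ by feeding the family $(\mu_M)_{M\in\M}$ into the universal property of $\bigoplus\M$, so that $t\circ\nu_M=\mu_M$ holds by construction and pins $t$ down uniquely; the equation $\pi_M\circ t=\rho_M$ then follows once more by precomposing with $\nu_N$, since $\pi_M\circ\mu_N$ equals $1_M$ for $N=M$ and $0$ otherwise by (ii), matching $\rho_M\circ\nu_N$.

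I do not expect a genuine obstacle here, as the content is entirely formal. The only place demanding attention is the bookkeeping of sources and targets — making sure each composite lands in the correct hom-set so that the uniqueness arguments are applied to morphisms with matching domain and codomain — together with the discipline of always arguing through the universal property rather than through any concrete description of the (co)products.
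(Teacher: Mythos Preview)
Your proposal is correct and follows essentially the same route as the paper: both arguments extract $\rho_\Nc$, $\mu_\Nc$, $\rho_M$, $\mu_M$ and $t$ directly from the universal properties of $\bigoplus\M$ and $\prod\M$, reduce (ii) to the singleton case of (i), and verify the mixed identity in (iii) by testing against the structural morphisms. The only cosmetic difference is that in (iii) the paper first defines $t$ via the product (so $\pi_M\circ t=\rho_M$ holds by construction), introduces a second map $t'$ via the coproduct, and then shows $t=t'$, whereas you start from the coproduct side and verify the product equation afterwards; the two arguments are dual and equally valid.
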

\begin{proof}

(i) It suffices to prove the existence and the uniqueness of $\rho_\N$, the second claim has a dual proof.

Consider the diagram $(M \mid  M \in \M)$ with morphisms $(\wnu_M \mid M \in \M) \in \Ac(M, \bigoplus \Nc)$ where $\wnu_M = \nu_M$ for $M\in \Nc$ and $\wnu_M=0$ otherwise. Then the claim follows from the universal property of the coproduct $(\bigoplus\M,(\nu_M \mid M\in\M))$. 

(ii) Note that for the choice $\Nc := \bigoplus ( M ) \simeq M $ we have $\onu_M = 1_M$ and the claim follows from (i).

(iii) We obtain the requested morphism by the universal property 
of the product $(\prod\M, (\pi_M  \mid M\in \M))$ applying on the cone $(\bigoplus\M,(\rho_M \mid M\in\M))$ that is provided by (ii). Dually, there exists a unique $t' \in \Ac(\bigoplus \M, \prod \M)$ with $t' \circ \nu_M = \mu_M$. Then
\begin{equation*}
\pi_M \circ (t \circ \nu_M ) = \rho_M \circ \nu_M = 1_M = \pi_M \circ \mu_M = \pi_M \circ (t' \circ \nu_M),
\end{equation*}
hence $t \circ \nu_M = \mu_M$ by the uniqueness of the associated morphism $\mu_M$ and $t = t'$ because $t'$ is the only one satisfying the condition for all $M \in \M$.
\end{proof}

We call the morphism $\rho_{\Nc}$ ($\mu_{\Nc}$) from (i) as the \emph{associated morphism} to the structural morphism $\nu_{\M}$ ($\pi_{\M}$) over the subcoproduct (the subproduct) over $\Nc$. For the special case in (ii), the morphisms $\rho_M$ ( $\mu_M$) from (ii) as the \emph{associated morphism} to the structural morphism $\nu_M$ ($\pi_M$). Let the unique morphism $t$ be called as the \emph{compatible coproduct-to-product} morphism over a family $\M$. Note that this morphism need not be a monomorphism, but it is so in case $\Ac$ being an Ab5-category \cite[Chapter 2, Corollary 8.10]{Pop73}. Moreover, $t$ is an isomorphism if the family $\M$ is finite.

\begin{lm}\label{StructAssandCtPMorphCommuteAbelianCats}
Let us use the notation from the previous lemma.
\begin{enumerate}
\renewcommand{\labelenumi}{(\roman{enumi})}  
\item For the subcoproduct over $\Nc$, the composition of the structural morphism of the subcoproduct and its associated morphism is the identity. Dually for the subproduct over $\Nc$, the composition of the associated morphism of the subproduct and its structural morphism is the identity, i.e. $\rho_\Nc \circ \nu_\Nc=1_{\bigoplus\Nc}$ and $\pi_\Nc \circ \mu_\Nc=1_{\prod\Nc}$, respectively.
\item If $\overline{t} \in \Ac\left( \bigoplus \Nc, \prod \Nc \right)$ and $t \in \Ac\left( \bigoplus \M, \prod \M \right)$ denote the compatible coproduct-to-product morphisms over $\Nc$ and $\M$ respectively, then the following diagram commutes:
\begin{equation*}
\xymatrix{ 
	\bigoplus\N  \ar@{->}[r]^{\nu_\Nc} \ar@{~>}[d]_{\overline{t}} & \bigoplus\M \ar@{.>}[r]^{\rho_\Nc}\ar@{->}[d]^{t} & \bigoplus\N  \ar@{~>}[d]^{\overline{t}} \\
	\prod\Nc \ar@{.>}[r]^{\mu_\Nc}  & \prod \M\ar@{->}[r]^{\pi_\Nc}  & \prod\N
}
\end{equation*}

\item Let $(\Nc_{\alpha} \mid \alpha<\kappa)$ be a disjoint partition of $\M$ and for $\alpha < \kappa$ let $S_\alpha:= \bigoplus \Nc_{\alpha}$, $P_{\alpha}:=\prod \Nc_{\alpha}$, and denote families of the limits and colimits like $\Sc: = (S_{\alpha} \mid \alpha < \kappa)$, $\Pc: = (P_{\alpha} \mid \alpha < \kappa)$. Then $\bigoplus\M \simeq \bigoplus{\Sc}$ and $\prod\M \simeq \prod{\Pc}$ where the both isomorphisms are canonical, i.e. for every object $M\in\M$ the diagrams commute:
\begin{equation*}
\xymatrix{
	M  \ar@{->}[r]^{\nu^{(\alpha)}_M} \ar@{->}[d]_{\nu_M} & S_{\alpha} \ar@{->}[d]^{\nu_{S_{\alpha}}} \\
	\bigoplus \M  \ar@{~>}[r]^{\simeq }  & \bigoplus \Sc  } 
	\ 
	\xymatrix{
	\prod \Pc \ar@{~>}[r]^{\simeq} \ar@{->}[d]_{\pi_{P_{\alpha}}} & \prod \M \ar@{->}[d]^{\pi_M} \\
	P_{\alpha} \ar@{->}[r]^{\pi^{(\alpha)}_M} & M}
\end{equation*}
\end{enumerate}
\end{lm}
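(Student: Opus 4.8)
The plan is to reduce every claimed identity of morphisms to its ``components'' by composing with the appropriate structural morphisms and then invoking the uniqueness clauses in the universal properties of the relevant (sub)coproducts and (sub)products; since $\Ac$ carries no elements, this is the only available mechanism. The tools are the defining relations $\nu_N=\nu_\Nc\circ\onu_N$, $\pi_N=\opi_N\circ\pi_\Nc$ together with the identities of Lemma~\ref{AssMorphAbelianCats}, notably $\rho_M\circ\nu_N=\delta_{M,N}1_M$, $\pi_N\circ\mu_M=\delta_{M,N}1_M$, $\pi_M\circ t=\rho_M$ and $t\circ\nu_M=\mu_M$, along with their bar-decorated analogues $\orho_M\circ\onu_N=\delta_{M,N}1_M$, $\opi_M\circ\ot=\orho_M$ and $\ot\circ\onu_M=\omu_M$ obtained by applying the same lemma to the subfamily $\Nc$. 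For (i) I would precompose $\rho_\Nc\circ\nu_\Nc$ with each $\onu_N$; using $\nu_\Nc\circ\onu_N=\nu_N$ and $\rho_\Nc\circ\nu_N=\onu_N$ this equals $\onu_N$, so the uniqueness part of the universal property of $\bigoplus\Nc$ forces $\rho_\Nc\circ\nu_\Nc=1_{\bigoplus\Nc}$; the identity $\pi_\Nc\circ\mu_\Nc=1_{\prod\Nc}$ is the exact dual, obtained by postcomposing with each $\opi_N$.

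For (ii) both squares are equalities of morphisms either into a product or out of a coproduct, so each is determined by its composites with the structural morphisms at both ends. For the left square $t\circ\nu_\Nc=\mu_\Nc\circ\ot$ (a morphism $\bigoplus\Nc\to\prod\M$) I would compute $\pi_M\circ(-)\circ\onu_N$ for $M\in\M$, $N\in\Nc$: the left-hand side gives $\rho_M\circ\nu_N=\delta_{M,N}1_M$, while the right-hand side vanishes unless $M\in\Nc$, where it becomes $\opi_M\circ\ot\circ\onu_N=\orho_M\circ\onu_N=\delta_{M,N}1_M$; the two agree in all cases, because $N\in\Nc$ and $M=N$ already force $M\in\Nc$. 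The right square $\ot\circ\rho_\Nc=\pi_\Nc\circ t$ (a morphism $\bigoplus\M\to\prod\Nc$) is symmetric: composing with $\opi_N$ and $\nu_M$ reduces both sides, via $\opi_N\circ\pi_\Nc=\pi_N$ and $\rho_\Nc\circ\nu_M\in\{\onu_M,0\}$, to $\delta_{M,N}1_N$.

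For (iii), which is the associativity of (co)products along a partition, I would exploit that each $M$ lies in exactly one block $\Nc_\alpha$. On the coproduct side the morphisms $\nu_{S_\alpha}\circ\nu^{(\alpha)}_M$ form a cone out of $\M$, so the universal property of $\bigoplus\M$ produces a unique $\phi\colon\bigoplus\M\to\bigoplus\Sc$ with $\phi\circ\nu_M=\nu_{S_\alpha}\circ\nu^{(\alpha)}_M$, which is precisely the commutativity of the left diagram. To invert $\phi$ I would build the inverse in two stages: for each $\alpha$ the family $(\nu_M\mid M\in\Nc_\alpha)$ induces $\psi_\alpha\colon S_\alpha\to\bigoplus\M$, and these in turn induce $\psi\colon\bigoplus\Sc\to\bigoplus\M$; the equalities $\psi\circ\phi=1$ and $\phi\circ\psi=1$ both follow by precomposing with the generating structural morphisms and invoking uniqueness. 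The statement $\prod\M\simeq\prod\Pc$ and the commutativity of the right diagram are the literal dual, built from $\pi^{(\alpha)}_M$ and $\pi_{P_\alpha}$.

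The obstacle here is organizational rather than conceptual: in (ii) one must keep the case split $M\in\Nc$ versus $M\notin\Nc$ perfectly aligned on both sides and be scrupulous about transporting the Lemma~\ref{AssMorphAbelianCats} relations from $\M$ to the barred versions over $\Nc$, while in (iii) the subtle point is to present the isomorphisms as genuinely canonical, that is, compatible with every structural morphism simultaneously, rather than as merely abstract isomorphisms, which is exactly what makes the two-stage construction of the inverse necessary.
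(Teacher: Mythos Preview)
Your proof is correct and follows essentially the same route as the paper: part~(i) is verbatim the paper's argument, and part~(iii) spells out what the paper dismisses as ``a straightforward consequence of the universal properties.'' The only tactical difference is in~(ii): you test the equality $t\circ\nu_\Nc=\mu_\Nc\circ\ot$ componentwise on both sides, composing with $\pi_M$ and $\onu_N$ simultaneously, whereas the paper composes only with $\onu_N$ and passes through the intermediate identities $t\circ\nu_N=\mu_N$, $\ot\circ\onu_N=\omu_N$ and $\mu_N=\mu_\Nc\circ\omu_N$ from Lemma~\ref{AssMorphAbelianCats}(ii); both variants are the same universal-property reduction and neither is materially shorter.
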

\begin{proof} 
(i) The equality $\rho_\Nc \circ \nu_\Nc=1_{\bigoplus\Nc}$ is implied the uniqueness of the universal morphism and the equalities $(\rho_\Nc \circ \nu_\Nc) \circ \onu_N = \rho_\Nc \circ \nu_N = \onu_N$  and $1_{\bigoplus\Nc} \circ \onu_N = \onu_N$ for all $N \in \Nc$. The equality $\pi_\Nc \circ \mu_\Nc=1_{\prod\Nc}$ is dual.

(ii) We need to show that $t \circ \nu_\Nc = \mu_\Nc \circ \ot$. For all $N \in \Nc$,  $(\pi_N \circ t) \circ \nu_N=\rho_N \circ \nu_N = 1_N$ by (iii) and by (ii). But $\pi_N \circ \nu_N = 1_N$, hence $\mu_N=t \circ \nu_N$ by the uniqueness of $\mu_N$. If $\omu_N \in \Ac(N,\prod\Nc)$ denotes the unique homomorphism ensured by (ii), then the last argument proves that $\omu_N=\ot \circ \onu_N$. Thus
\begin{equation*}
\begin{split}
(t \circ \nu_\N) \circ \onu_N  &= t \circ (\nu_\N \circ \onu_N )=t \circ \nu_N=\mu_N=\mu_\N \circ \omu_N=\mu_\N \circ (\ot \circ \onu_N) = \\
&= (\mu_\N \circ \ot )\circ \onu_N
\end{split}
\end{equation*}
and the claim follows from the universal property of the coproduct $(\bigoplus\Nc,(\onu_N \mid N\in\Nc))$. The dual argument proves that $\pi_\N \circ t=\ot \circ \rho_\N$.

(iii) A straightforward consequence of the universal properties of the coproducts and products.
\end{proof}

Applying the categorical tools we have introduced we are ready to present the central notion of the paper.
Let us suppose that $\Cc$ is a subclass of objects of $\Ac$, $M$ is an object in $\Ac$ and $\Nc$ is a system of objects of $\Cc $. As the functor $\Ac(M,-)$ on any additive category maps into $\Hom$-sets with a structure of abelian groups we can define a mapping
\begin{equation*}
\Psi_\Nc: \bigoplus\left( \Ac(M,N) \mid N\in\Nc\ \right) \to \Ac(M,\bigoplus\Nc)
\end{equation*}
in the following way:

For a family of mappings $\varphi=(\varphi_N \mid N\in\Nc)$ in $\bigoplus \left( \Ac(M,N) \mid N\in\Nc \right)$ let us denote by $\Fc$ a finite subfamily such that $\varphi_N=0$ whenever $N\notin \Fc$ and let $\tau \in \Ac(M, \prod \Nc)$ be the unique morphism given by the universal property of the product $(\prod \Nc,(\pi_N \mid N\in\Fc))$ applied on the cone $(M,(\varphi_N \mid  N\in\Nc))$, i.e. $\pi_N \circ \tau=\varphi_N$ for every $N \in \Nc$. Then 
\begin{equation*}
\Psi_\Nc(\varphi)=\nu_\Fc \circ \nu^{-1} \circ \pi_\Fc \circ \tau
\end{equation*}
where $\nu \in \Ac(\bigoplus \Fc, \prod\Fc)$ denotes the isomorphism provided by Lemma~\ref{AssMorphAbelianCats}(iii).

Note that the definition $\Psi_\Nc(\varphi)$ does not depend on choice of $\Fc$ and recall an elementary observation which plays a key role in the definition of a compact object.

\begin{lm} 
The mapping $\Psi_\Nc$ is a monomorphism in the category of abelian groups for every family of objects $\Nc$.
\end{lm}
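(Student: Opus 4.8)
The plan is to verify two things: that $\Psi_\Nc$ respects addition, and that it is injective, the latter by exhibiting an explicit component-wise retraction built from the associated morphisms of Lemma~\ref{AssMorphAbelianCats}.

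First I would record additivity. Given $\varphi,\psi$ in $\bigoplus(\Ac(M,N)\mid N\in\Nc)$, choose a single finite $\Fc\subseteq\Nc$ containing the supports of both; this is legitimate precisely because the value $\Psi_\Nc(\varphi)$ does not depend on the chosen finite set. The associated morphisms into $\prod\Nc$ satisfy $\pi_N\circ(\tau_\varphi+\tau_\psi)=\varphi_N+\psi_N$ for every $N\in\Nc$, so by the uniqueness clause in the universal property of $\prod\Nc$ we get $\tau_{\varphi+\psi}=\tau_\varphi+\tau_\psi$. Since composition in an additive category is bilinear and the factors $\nu_\Fc,\nu^{-1},\pi_\Fc$ are the same for both arguments, applying them termwise yields $\Psi_\Nc(\varphi+\psi)=\Psi_\Nc(\varphi)+\Psi_\Nc(\psi)$. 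Thus $\Psi_\Nc$ is a homomorphism of abelian groups.

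The key step is the retraction identity
\[
\rho_N\circ\Psi_\Nc(\varphi)=\varphi_N \qquad\text{for every }N\in\Nc,
\]
where $\rho_N\in\Ac(\bigoplus\Nc,N)$ is the associated morphism to $\nu_N$ furnished by Lemma~\ref{AssMorphAbelianCats}(ii). To prove it I would unwind $\Psi_\Nc(\varphi)=\nu_\Fc\circ\nu^{-1}\circ\pi_\Fc\circ\tau$ and evaluate the four factors in turn. Testing against the structural morphisms $\onu_K$ of $\bigoplus\Fc$ and invoking the uniqueness in the universal property of $\bigoplus\Fc$ gives $\rho_N\circ\nu_\Fc=\orho_N$ for $N\in\Fc$ (and $\rho_N\circ\nu_\Fc=0$ for $N\notin\Fc$, which matches $\varphi_N=0$ there), where $\orho_N$ is the associated morphism to $\onu_N$ within $\bigoplus\Fc$. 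Next, Lemma~\ref{AssMorphAbelianCats}(iii) applied over the finite family $\Fc$ reads $\opi_N\circ\nu=\orho_N$, whence $\orho_N\circ\nu^{-1}=\opi_N$. Finally $\opi_N\circ\pi_\Fc=\pi_N$ by the defining property of the structural morphism of the subproduct, and $\pi_N\circ\tau=\varphi_N$ by the defining property of $\tau$. Composing these four identities yields the claim.

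With the retraction in hand, injectivity is immediate: if $\Psi_\Nc(\varphi)=0$ then $\varphi_N=\rho_N\circ 0=0$ for every $N\in\Nc$, so $\varphi=0$. Being an additive and injective map, $\Psi_\Nc$ is a monomorphism in the category of abelian groups. I expect the only real obstacle to be bookkeeping in the key step, namely keeping the $\Fc$-level barred morphisms $\orho_N,\opi_N,\onu_N$ distinct from their $\Nc$-level counterparts $\rho_N,\pi_N,\nu_N$; each individual identity itself is a direct application of a universal property or of the two preceding lemmas.
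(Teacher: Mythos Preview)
Your proof is correct and follows the same route as the paper: the key point in both is the retraction identity $\rho_N\circ\Psi_\Nc(\varphi)=\varphi_N$, from which injectivity is immediate. The paper compresses this into a single line (and silently omits the additivity check you supply), so your argument is simply a more careful unpacking of the same idea.
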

\begin{proof}  
If $\Psi_\Nc(\sigma)=0$, then $\sigma=(\rho_N \circ \sigma)_N=(0)_N$, hence $\Ker \Psi_\Nc=0$.
\end{proof}

An object $M$ is said to be \emph{$\Cc$-compact} if $\Psi_\Nc$ is an isomorphism for every family $\Nc\subseteq\Cc$,
$M$ is {\it compact} in the category $\Ac$ if it is $\Ac^{o}$-compact for the class of all object, and $M$ is \emph{self-compact} assuming $\{M\}$-compact. Note that every object is $\{0\}$-compact. 

First we formulate an elementary criterion of identifying $\Cc$-compact object.

\begin{lm}  
If $M$ is an object and a class of objects $\Cc$, then it is equivalent:
\begin{enumerate}
\item $M$ is $\Cc$-compact,
\item for every $\Nc \subseteq \Cc$ and $f \in \Ac(M, \bigoplus \Nc)$ there exists a finite subsystem $\Fc \subseteq \Nc$
and a morphism $f' \in \Ac(M, \bigoplus \Fc)$ such that  $f=\nu_\Fc \circ f'$.
\item for every  $\Nc \subseteq \Cc$ and every $f \in \Ac(M, \bigoplus \Nc)$ there exists a finite subsystem $\Fc$ contained in $\Nc$ such that  $f = \sum\limits_{F \in \Fc} \nu_F \circ \rho_F \circ f$.
\end{enumerate}
\end{lm}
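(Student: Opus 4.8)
The plan is to establish the cyclic chain of implications $(1)\Rightarrow(2)\Rightarrow(3)\Rightarrow(1)$. Since the preceding lemma already furnishes that each $\Psi_\Nc$ is a monomorphism, the property of being $\Cc$-compact in (1) is equivalent to the surjectivity of $\Psi_\Nc$ for every $\Nc\subseteq\Cc$, and I shall exploit this reduction throughout. A recurring ingredient is the finite biproduct identity $\sum_{F\in\Fc}\onu_F\circ\orho_F=1_{\bigoplus\Fc}$, valid whenever $\Fc$ is finite: both sides agree after precomposition with each structural morphism $\onu_G$ by Lemma~\ref{AssMorphAbelianCats}(ii) (which gives $\orho_F\circ\onu_G=\delta_{FG}1_G$), so they coincide by the universal property of $\bigoplus\Fc$, the sum being meaningful precisely because $\Fc$ is finite.

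For $(1)\Rightarrow(2)$ I would read the conclusion off the definition of $\Psi_\Nc$ directly: every value $\Psi_\Nc(\varphi)=\nu_\Fc\circ(\nu^{-1}\circ\pi_\Fc\circ\tau)$ factors through the structural morphism $\nu_\Fc$ of a finite subcoproduct, so if $\Psi_\Nc$ is onto then an arbitrary $f\in\Ac(M,\bigoplus\Nc)$ equals $\nu_\Fc\circ f'$ with $f':=\nu^{-1}\circ\pi_\Fc\circ\tau$. For $(2)\Rightarrow(3)$, starting from $f=\nu_\Fc\circ f'$ I would compute $\sum_{F\in\Fc}\nu_F\circ\rho_F\circ f$ head on: the subfamily relations $\nu_F=\nu_\Fc\circ\onu_F$ and $\rho_F=\orho_F\circ\rho_\Fc$ of Lemma~\ref{AssMorphAbelianCats}(ii), together with $\rho_\Fc\circ\nu_\Fc=1_{\bigoplus\Fc}$ from Lemma~\ref{StructAssandCtPMorphCommuteAbelianCats}(i), reduce each summand to $\nu_\Fc\circ\onu_F\circ\orho_F\circ f'$, and the biproduct identity then collapses the sum to $\nu_\Fc\circ f'=f$.

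The implication $(3)\Rightarrow(1)$, i.e.\ surjectivity, is where the real work lies and is the step I expect to be the main obstacle. Given $f\in\Ac(M,\bigoplus\Nc)$, (3) supplies a finite $\Fc\subseteq\Nc$ with $f=\sum_{F\in\Fc}\nu_F\circ\rho_F\circ f$. The natural preimage candidate is $\varphi=(\rho_N\circ f\mid N\in\Nc)$; I would first check that its support lies in $\Fc$, since for $G\in\Nc\setminus\Fc$ one has $\rho_G\circ f=\sum_{F\in\Fc}(\rho_G\circ\nu_F)\circ\rho_F\circ f=0$ by Lemma~\ref{AssMorphAbelianCats}(ii), so that $\varphi$ is a legitimate element of $\bigoplus(\Ac(M,N)\mid N\in\Nc)$. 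The delicate part is then to unwind the definition of $\Psi_\Nc(\varphi)$ through the auxiliary morphisms $\tau$, $\pi_\Fc$ and $\nu^{-1}$ and verify it recovers $f$. Writing $g:=\nu^{-1}\circ\pi_\Fc\circ\tau$, I would use $\opi_F\circ\pi_\Fc=\pi_F$ and $\pi_F\circ\tau=\rho_F\circ f$ to get $\opi_F\circ(\pi_\Fc\circ\tau)=\rho_F\circ f$, and then $\opi_F\circ\nu=\orho_F$ from Lemma~\ref{AssMorphAbelianCats}(iii) applied to the family $\Fc$ to obtain $\orho_F\circ g=\rho_F\circ f$ for every $F\in\Fc$. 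Inserting the biproduct identity one last time yields $\Psi_\Nc(\varphi)=\nu_\Fc\circ g=\nu_\Fc\circ\bigl(\sum_{F\in\Fc}\onu_F\circ\orho_F\bigr)\circ g=\sum_{F\in\Fc}\nu_F\circ\rho_F\circ f=f$, which closes the cycle.
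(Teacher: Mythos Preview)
Your argument is correct and follows essentially the same cyclic scheme $(1)\Rightarrow(2)\Rightarrow(3)\Rightarrow(1)$ as the paper, with the same choice of preimage $\varphi=(\rho_N\circ f)_N$ in the last step. The only notable difference is one of emphasis: the paper disposes of $(3)\Rightarrow(1)$ in a single line (``it is easy to see that $f=\Psi_\Nc(\varphi)$''), whereas you treat it as the main obstacle and carefully unwind the definition of $\Psi_\Nc$ via the biproduct identity; conversely, in $(1)\Rightarrow(2)$ the paper sets $f'=\rho_\Fc\circ f$ and checks $\nu_\Fc\circ f'=f$, while you take $f'=\nu^{-1}\circ\pi_\Fc\circ\tau$ directly from the definition---these two choices of $f'$ coincide, so the distinction is purely cosmetic.
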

\begin{proof} 
$(1) \to (2)$: Let $\Nc \subseteq \Cc$ and $f \in \Ac(M, \bigoplus \Nc)$. Then there exists a $\Psi_\Nc$-preimage $\varphi$ of $f$, hence there can be chosen a finite subsystem $\Fc \subseteq \Nc$ such that
\begin{equation*}
f=\Psi_\Nc(\varphi)=\nu_\Fc \circ \nu^{-1} \circ \pi_\Fc \circ \tau,
\end{equation*}
where we use the notation from the definition of the mapping $\Psi_\Nc$. Now it remains to put $f'=\rho_\Fc\circ f$ and utilize Lemma~\ref{AssMorphAbelianCats}(ii) to verify that
\begin{equation*}
\nu_\Fc \circ f'= \nu_\Fc \circ\rho_\Fc\circ f=\nu_\Fc \circ\rho_\Fc\circ\nu_\Fc \circ 1_{\oplus \Fc} \circ \nu^{-1} \circ \pi_\Fc \circ \tau=f.
\end{equation*}

$(2) \to (3)$: Since $\rho_\Fc \circ \nu_\Fc=1_{\bigoplus\Fc}$ by Lemma~\ref{AssMorphAbelianCats}(ii) we obtain that
\begin{equation*}
\nu_\Fc\circ\rho_\Fc\circ f=\nu_\Fc\circ\rho_\Fc\circ\nu_\Fc \circ f'=\nu_\Fc\circ f'=f.
\end{equation*}
Moreover, $\nu_\Fc\circ\rho_\Fc= \sum\limits_{F \in \Fc} \nu_F \circ \rho_F$, hence
\begin{equation*}
f= \nu_\Fc\circ\rho_\Fc\circ f = \sum\limits_{F \in \Fc} \nu_F \circ \rho_F \circ f.
\end{equation*}

$(3) \to (1)$: If we put $\varphi_F:=\rho_F \circ f$ for $F\in \Fc$ and $\varphi_N:=0$ for $N\notin \Fc$ and take $\varphi:=(\varphi_N \mid N\in\N)$, then it is easy to see that $f = \Psi_\Nc(\varphi)$ hence $\Psi_\Nc$ is surjective.
\end{proof}

Now, we can prove a characterization, which generalizes equivalent conditions well-known for the categories of modules. Note that it will play similarly important role for categorical approach to compactness as in the special case of module categories.

\begin{thm}\label{e.c.}  
	The following conditions are equivalent for an object $M$ and a class of objects $\Cc$:
\begin{enumerate}
\item $M$ is not $\Cc$-compact,
\item there exists a countably infinite system $\Nc_\omega$  
of objects from $\Cc$ and $\varphi \in \Ac(M,\bigoplus\Nc_\omega)$ such that $\rho_N \circ \varphi\ne 0$ for every $N\in\Nc_\omega$,
\item for every system $\Gc$ of $\Cc$-compact objects and every
epimorphism $e \in \Ac(\bigoplus\Gc, M)$ there exists a countable subsystem $\Gc_\omega \subseteq \Gc$ such that $f^c \circ e \circ \nu_{\Gc_\omega}\ne 0$ for the cokernel $f^c$ of every morphism $f\in\Ac(F,M)$ where $F$ is a $\Cc$-compact object. 
\end{enumerate}
\end{thm}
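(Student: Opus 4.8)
The plan is to establish the cycle $(1)\Rightarrow(2)\Rightarrow(3)\Rightarrow(1)$, using throughout the preceding Lemma, which says that a morphism into $\bigoplus\Nc$ lies in the image of $\Psi_\Nc$ exactly when it factors through a finite subcoproduct, i.e. when $f=\nu_\Fc\circ\rho_\Fc\circ f$ for some finite $\Fc\subseteq\Nc$. Thus $M$ fails to be $\Cc$-compact precisely when some $f\in\Ac(M,\bigoplus\Nc)$ with $\Nc\subseteq\Cc$ satisfies $f\neq\nu_\Fc\circ\rho_\Fc\circ f$ for every finite $\Fc$.

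For $(1)\Rightarrow(2)$ I would fix such a non-factoring $f$ and consider its support $S=\{N\in\Nc:\rho_N\circ f\neq 0\}$. The heart of the matter is that $S$ is infinite. Granting this, choose a countably infinite $\Nc_\omega\subseteq S$ and put $\varphi=\rho_{\Nc_\omega}\circ f$; checking on the structural morphisms via Lemma~\ref{AssMorphAbelianCats} one gets $\orho_N\circ\rho_{\Nc_\omega}=\rho_N$, whence $\orho_N\circ\varphi=\rho_N\circ f\neq 0$ for each $N\in\Nc_\omega$, i.e. every component of $\varphi$ over $\Nc_\omega$ is nonzero, which is exactly (2). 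To see that $S$ is infinite I argue by contradiction: were $S$ finite, then $g=f-\nu_S\circ\rho_S\circ f$ would satisfy $\rho_N\circ g=0$ for all $N\in\Nc$, hence $t\circ g=0$ for the compatible coproduct-to-product morphism $t$; if $t$ is a monomorphism this forces $g=0$, i.e. $f=\nu_S\circ\rho_S\circ f$, contradicting the choice of $f$. I expect the monomorphy of $t$ to be the principal obstacle: in a general abelian category $t$ need not be monic, and a nonzero morphism into $\Ker t$ would provide a non-$\Cc$-compactness witness of \emph{empty} support for which (2) may genuinely fail. I would therefore carry out this step under the monomorphy of $t$ recorded after Lemma~\ref{AssMorphAbelianCats} (valid in the Ab5 setting) and flag it explicitly.

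The implication $(2)\Rightarrow(3)$ is the substantive point and I would prove it directly. Let $\varphi:M\to\bigoplus\Nc_\omega$ have all components nonzero, with $\Nc_\omega\subseteq\Cc$ countably infinite, and let $e:\bigoplus\Gc\to M$ be any epimorphism with $\Gc$ a system of $\Cc$-compact objects. For each $N\in\Nc_\omega$ we have $\rho_N\circ\varphi\neq 0$, so $\rho_N\circ\varphi\circ e\neq 0$ as $e$ is epi, and since the structural morphisms $\nu_G$ are jointly epimorphic there is $G(N)\in\Gc$ with $\rho_N\circ\varphi\circ e\circ\nu_{G(N)}\neq 0$. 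Set $\Gc_\omega=\{G(N):N\in\Nc_\omega\}$. To verify (3), suppose towards a contradiction that $f^c\circ e\circ\nu_{\Gc_\omega}=0$ for some $\Cc$-compact $F$, some $f\in\Ac(F,M)$, and $f^c$ the cokernel of $f$. Then $e\circ\nu_{\Gc_\omega}$ factors through $\Ker f^c=\im f$, say $e\circ\nu_{\Gc_\omega}=\iota\circ h$ with $\iota:\im f\hookrightarrow M$. Writing the image factorization $f=\iota\circ f'$ with $f'$ epi, compactness of $F$ makes $\varphi\circ f$ finitely supported, say on $\Fc_0$, so $\rho_N\circ\varphi\circ f=0$ for $N\notin\Fc_0$; right-cancelling the epimorphism $f'$ yields $\rho_N\circ\varphi\circ\iota=0$ for all $N\notin\Fc_0$. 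Picking $N^*\in\Nc_\omega\setminus\Fc_0$ and using $\nu_{G(N^*)}=\nu_{\Gc_\omega}\circ\onu_{G(N^*)}$, I then compute $\rho_{N^*}\circ\varphi\circ e\circ\nu_{G(N^*)}=\rho_{N^*}\circ\varphi\circ\iota\circ h\circ\onu_{G(N^*)}=0$, contradicting the choice of $G(N^*)$. Hence $f^c\circ e\circ\nu_{\Gc_\omega}\neq 0$ for every such $f$, which is (3).

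Finally $(3)\Rightarrow(1)$ I would prove in contrapositive form, where it is clean: if $M$ is $\Cc$-compact, then $M$ is itself an admissible system of $\Cc$-compact objects, so apply (3) to $\Gc=\{M\}$ with $e=1_M$. Every subsystem $\Gc_\omega$ is then $\emptyset$ or $\{M\}$; the choice $F=M$, $f=1_M$ gives $f^c=0$ and hence $f^c\circ e\circ\nu_{\Gc_\omega}=0$ when $\Gc_\omega=\{M\}$, while $\Gc_\omega=\emptyset$ gives $e\circ\nu_\emptyset=0$. Thus no subsystem meets the requirement of (3), so (3) fails, closing the cycle. The only genuinely delicate ingredient is the infinite-support step of $(1)\Rightarrow(2)$ and its reliance on the monomorphy of $t$; the remaining implications are formal consequences of the universal properties and the Lemma.
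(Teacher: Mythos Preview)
Your argument follows the same cycle $(1)\Rightarrow(2)\Rightarrow(3)\Rightarrow(1)$ as the paper and is correct. Two points of comparison are worth recording.

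For $(1)\Rightarrow(2)$, you are more careful than the paper. The paper simply asserts that the set $\{N\in\Nc:\rho_N\circ\varphi\ne 0\}$ is infinite and picks a countable subfamily, without justification. Your observation that this step requires the compatible morphism $t$ to be a monomorphism (hence an Ab5-type hypothesis, as noted after Lemma~\ref{AssMorphAbelianCats}) is exactly right: in a bare abelian category a nonzero morphism into $\bigoplus\Nc$ with empty support could in principle exist, and then (2) would not follow. So your flagged caveat is genuine, and the paper's proof tacitly relies on the same assumption.

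For $(2)\Rightarrow(3)$, both arguments construct $\Gc_\omega$ the same way and argue by contradiction, but organize the endgame differently. You use the image factorization $f=\iota\circ f'$ together with $\Ker f^c=\im f$, then right-cancel the epimorphism $f'$ to transport the finite-support information from $\varphi\circ f$ to $\varphi\circ\iota$. The paper instead works directly with the cokernel: for each $N$ with $\rho_N\circ\varphi\circ f=0$, the universal property of $f^c$ gives $\alpha$ with $\alpha\circ f^c=\rho_N\circ\varphi$, whence $\rho_N\circ\varphi\circ e\circ\nu_{\Gc_\omega}=\alpha\circ f^c\circ e\circ\nu_{\Gc_\omega}=0$, contradicting the choice of $G_N$; thus $\rho_N\circ\varphi\circ f\ne 0$ for every $N\in\Nc_\omega$, contradicting $\Cc$-compactness of $F$. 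The paper's version is marginally slicker in that it avoids the image object and the cancellation step, but the content is the same. Your $(3)\Rightarrow(1)$ via $F=M$, $f=1_M$ is exactly what the paper does.
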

\begin{proof} 
$(1) \to (2)$: Let $\Nc$ be a system of objects from $\Cc$ for which there exists a morphism $\varphi\in\Ac(M,\bigoplus \Nc)\setminus \im\Psi_\Nc$. Then it is enough to take $\N_\omega$ as any countable subsystem of the infinite system $(N\in\Nc \mid \rho_N \circ \varphi\ne 0)$.

$(2) \to (3)$ Let $\Gc$ be a family of  $\Cc$-compact objects and  $e\in\Ac(\bigoplus\Gc, M)$ an epimorphism. If $N\in \Nc_\omega$, then $(\rho_N \circ \varphi) \circ e\ne 0$, hence by the universal property of the coproduct $\bigoplus\Gc$ applied on the cone $(N, (\rho_N \circ \varphi \circ e \circ \nu_{G} \mid G\in\Gc))$ there exists $G_N\in\Gc$ such that $\Ac(G_N, N) \ni \rho_N \circ \varphi \circ e \circ \nu_{G_N}\ne 0$. Put $\Gc_\omega=\left( G_N \mid N\in\Nc_\omega \right)$, where every object from the system $\Gc$ is taken at most once, i.e. we have a canonical monomorphism $\nu_{\Gc_{\omega}} \in \Ac \left(\bigoplus \Gc_{\omega}, \bigoplus \Gc \right)$.

Assume to the contrary that there exist a $\Cc$-compact object $F$ and a morphism $f \in \Ac(F,M)$ such that $f^c \circ e \circ \nu_{\Gc_\omega}= 0$ where $f^c \in \Ac(M, \Coker(f))$ is the cokernel of the morphism $f$.
Let $N\in\Nc_\omega$ and, furthermore, assume that $\rho_N \circ \varphi \circ f=0$. Then the universal property of the cokernel ensures the existence of a morphism $\alpha\in\Ac(\Coker(f),N)$ such that $\alpha \circ f^c=\rho_N\circ \varphi$, i.e. that commutes the diagram:
\begin{equation*}
\xymatrix{
	& & F \ar@{->}[d]^{f} & \\
	\bigoplus\Gc_{\omega}  \ar@{->}[r]^{\nu_{\Gc_{\omega}}} & \bigoplus \Gc \ar@{->}[r]^{e} & M \ar@{->}[r]^{\varphi} \ar@{->}[d]^{f^{c}} & \bigoplus \Nc_{\omega} \ar@{->}[d]^{\rho_N} \\
	& & \Coker(f) \ar@{~>}[r]^{\alpha} & N }
\end{equation*}

Thus $(\rho_N \circ \varphi ) \circ e \circ \nu_{\Gc_\omega}= (\alpha \circ f^c ) \circ e \circ \nu_{\Gc_\omega}=0$, which contradicts to the construction of $\Gc_\omega$. We have proved that $\rho_N \circ ( \varphi \circ f) \ne 0$ for each $N\in\Nc_\omega$, hence $\varphi \circ f\in \Ac(F, \bigoplus \Nc) \setminus \im \Psi_{\Nc_{\omega}}$. We get the contradiction with the assumption that $F$ is $\Cc$-compact, thus $f^c \circ e \circ \nu_{\Gc_\omega}=\ne 0$ 

$(3) \to (1)$: If $M$ is $\Cc$-compact itself, then the system $\Gc=(M)$ and the identity map $e$ on $M$ are counterexamples for the condition $(3)$.
\end{proof}

\begin{cor}
If $\Ac$ contains injective envelopes $E(U)$ for all objects $U\in\Cc$, then an object $M$ is not compact if and only if there exists a (countable) system of injective envelopes $\Ec$ in $\Ac$ of objects of $\Cc$ for which $\Psi_\Nc$ is not surjective for some subsystem $\Nc$ of $\Cc$.
\end{cor}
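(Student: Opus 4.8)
The plan is to read the statement as a direct corollary of Theorem~\ref{e.c.}, obtained by replacing each object of a witnessing system with its injective envelope. Writing $\iota_N\in\Ac(N,E(N))$ for the canonical monic inclusion of an object into its injective envelope, the entire argument reduces to a single observation: a monomorphism cannot annihilate a nonzero component. I would therefore treat the two implications separately, the forward one resting on Theorem~\ref{e.c.}(2) together with this observation, and the converse being essentially immediate.

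First I would handle the forward implication. Assuming $M$ is not compact, Theorem~\ref{e.c.}(2) supplies a countable system $\Nc_\omega$ of objects and a morphism $\varphi\in\Ac(M,\bigoplus\Nc_\omega)$ with $\rho_N\circ\varphi\ne 0$ for every $N\in\Nc_\omega$. I would set $\Ec=(E(N)\mid N\in\Nc_\omega)$ and let $j\in\Ac(\bigoplus\Nc_\omega,\bigoplus\Ec)$ be the morphism induced by the inclusions $\iota_N$ via the universal property of the coproduct, so that $j\circ\nu_N=\nu_{E(N)}\circ\iota_N$. Comparing both sides on the structural morphisms $\nu_{N'}$ and invoking Lemma~\ref{AssMorphAbelianCats}(ii) gives the key identity $\rho_{E(N)}\circ j=\iota_N\circ\rho_N$, whence $\rho_{E(N)}\circ(j\circ\varphi)=\iota_N\circ\rho_N\circ\varphi\ne 0$ for each $N$, because $\iota_N$ is a monomorphism. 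Since a morphism in $\im\Psi_\Ec$ factors through a finite subcoproduct and therefore has only finitely many nonzero components, the map $j\circ\varphi$ with its infinitely many nonzero components lies outside $\im\Psi_\Ec$. Thus $\Psi_\Ec$ fails to be surjective, where $\Ec$ is the system of injective envelopes indexed by the countable subsystem $\Nc_\omega\subseteq\Cc$.

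For the converse I would simply observe that each injective envelope $E(N)$ is itself an object of $\Ac$, so any system $\Ec$ of such envelopes is an admissible system in the sense of Theorem~\ref{e.c.}. Consequently, if $\Psi_\Ec$ is not surjective for some such $\Ec$, then condition (2) of Theorem~\ref{e.c.} holds verbatim for the system $\Ec$, and $M$ is not compact; no further construction is required.

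The only genuinely delicate point is bookkeeping rather than mathematics, namely verifying the compatibility identity $\rho_{E(N)}\circ j=\iota_N\circ\rho_N$ relating the induced map $j$ to the associated morphisms of Lemma~\ref{AssMorphAbelianCats}. This is precisely what that lemma was set up to provide, and I expect no obstacle beyond it: essentiality of the envelope is never used, only that $\iota_N$ is monic, and the converse implication falls out directly from the definition of compactness.
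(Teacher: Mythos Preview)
Your proposal is correct and follows essentially the same approach as the paper: take the witnessing morphism $\varphi$ from Theorem~\ref{e.c.}(2) and compose it with the canonical map $\bigoplus\Nc_\omega\to\bigoplus\Ec$ induced by the inclusions $\iota_N$. The paper's proof is a single sentence naming exactly this composition, whereas you spell out the verification $\rho_{E(N)}\circ j=\iota_N\circ\rho_N$ and the converse implication explicitly; this added detail is sound and does not deviate from the paper's line of argument.
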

\begin{proof}
By the previous proposition, it suffices to consider the composition of $\varphi \in \Ac(M, \bigoplus \Nc_\omega) \setminus \im\Psi_{\Nc_\omega}$ where $\Nc_\omega$ witnesses that $M$ is not $\Cc$-compact and the canonical morphism $\iota \in \Ac \left( \bigoplus \Nc_{\omega}, \bigoplus \Ec \right)$, where we put $\Ec := \left(E(N) \mid N\in\Nc_{\omega} \right)$. 
\end{proof}	

\section{Classes of compact objects }

Let us denote by $\Ac$ a complete abelian category and $\Cc$ a class of some objects of $\Ac$.
First, notice that closure properties of the class of $\Cc$-compact objects are similar to closure properties of classes of dually slender modules since their follows by the fact that the contravariant functor $\Ac(-,\bigoplus\Nc)$ commutes with finite coproducts and it is left exact. We present a detailed proof of the fact that the class of all $\Cc$-compact objects is closed under finite coproducts and cokernels using Theorem~\ref{e.c.}.

\begin{lm}  
	The class of all $\Cc$-compact objects is closed under finite direct sums and all cokernels of morphisms $\alpha \in \Ac(M, C)$ where $C$ is $\Cc$-compact and $M$ is arbitrary.
\end{lm}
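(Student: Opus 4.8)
The plan is to deduce both closure properties from the characterization of non-$\Cc$-compactness supplied by Theorem~\ref{e.c.}, exploiting the equivalence of its conditions (1) and (2). In each case I argue by contradiction: assuming the candidate object is not $\Cc$-compact I read off from Theorem~\ref{e.c.}(2) a countably infinite system $\Nc_\omega \subseteq \Cc$ and a morphism $\varphi$ with $\rho_N \circ \varphi \ne 0$ for all $N \in \Nc_\omega$, and then I transport this witness to one of the $\Cc$-compact objects at hand, contradicting its $\Cc$-compactness. For finite direct sums one may alternatively bypass Theorem~\ref{e.c.} and argue directly from the factorization criterion of the preceding lemma.

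For closure under finite direct sums it suffices, by a straightforward induction, to treat $M = M_1 \oplus M_2$ with $M_1, M_2$ both $\Cc$-compact. Suppose $M$ is not $\Cc$-compact and fix $\varphi \in \Ac(M, \bigoplus \Nc_\omega)$ as above. For each $N \in \Nc_\omega$ we have $\rho_N \circ \varphi \ne 0$, so by the universal property of the coproduct $M_1 \oplus M_2$ (a morphism out of it vanishes once both composites with the structural injections $\nu_{M_1}, \nu_{M_2}$ do) at least one of $\rho_N \circ \varphi \circ \nu_{M_1}$, $\rho_N \circ \varphi \circ \nu_{M_2}$ is nonzero. The pigeonhole principle then yields an index $j \in \{1,2\}$ and an infinite subsystem $\Nc' \subseteq \Nc_\omega$ with $\rho_N \circ \varphi \circ \nu_{M_j} \ne 0$ for every $N \in \Nc'$. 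Composing with the associated morphism $\rho_{\Nc'} \in \Ac(\bigoplus \Nc_\omega, \bigoplus \Nc')$ of Lemma~\ref{AssMorphAbelianCats}(i), and using Lemma~\ref{AssMorphAbelianCats}(ii) to identify the components of $\rho_{\Nc'} \circ \varphi \circ \nu_{M_j}$ with the surviving $\rho_N \circ \varphi \circ \nu_{M_j}$, I obtain a morphism $\psi \in \Ac(M_j, \bigoplus \Nc')$ all of whose components are nonzero. By Theorem~\ref{e.c.} this contradicts the $\Cc$-compactness of $M_j$. The quick alternative: given $f \in \Ac(M, \bigoplus \Nc)$, each $f \circ \nu_{M_j}$ factors through a finite subcoproduct, and the finite union of the two index sets yields a finite subcoproduct through which $f$ factors.

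For closure under cokernels, let $\alpha \in \Ac(M, C)$ with $C$ being $\Cc$-compact, and let $\alpha^c \in \Ac(C, \Coker \alpha)$ be its cokernel, which is an epimorphism. Assume $\Coker \alpha$ is not $\Cc$-compact and take $\varphi \in \Ac(\Coker \alpha, \bigoplus \Nc_\omega)$ from Theorem~\ref{e.c.}(2). Since $\alpha^c$ is epic, precomposition with it is injective on every $\Hom$-group, so from $\rho_N \circ \varphi \ne 0$ we get $\rho_N \circ (\varphi \circ \alpha^c) = (\rho_N \circ \varphi) \circ \alpha^c \ne 0$ for each $N \in \Nc_\omega$. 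Thus $\varphi \circ \alpha^c \in \Ac(C, \bigoplus \Nc_\omega)$ witnesses, through the very same system $\Nc_\omega$, that $C$ is not $\Cc$-compact, contradicting the hypothesis.

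I expect the cokernel case to be essentially immediate, the whole point being the single observation that an epimorphism cannot annihilate any nonzero component. The only genuine friction lies in the finite direct sum case: splitting each component $\rho_N \circ \varphi$ across the two summands, extracting by pigeonhole an infinite subsystem attached to a single summand $M_j$, and then reindexing the witness along $\rho_{\Nc'}$ so that it really lives over $\bigoplus \Nc'$ with every component still nonzero — this last reindexing is where Lemma~\ref{AssMorphAbelianCats}(i),(ii) must be applied with care. The factorization alternative avoids this bookkeeping entirely and is the cleaner route if one is willing not to route everything through Theorem~\ref{e.c.}.
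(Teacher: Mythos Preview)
Your proof is correct and follows essentially the same route as the paper: both parts argue by contraposition via Theorem~\ref{e.c.}(2), using pigeonhole across the summands for the finite direct sum case and precomposition with the cokernel epimorphism for the second case. Your treatment is in fact a bit more careful than the paper's --- you make the reindexing along $\rho_{\Nc'}$ explicit where the paper simply asserts that an infinite set of nonzero components suffices --- and your brief alternative via the factorization criterion is a legitimate shortcut the paper does not mention.
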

\begin{proof} 
Suppose that $\bigoplus_{i=1}^n M_i$ is not $\Cc$-compact.
Then by Theorem~\ref{e.c.} there exist a sequence $(N_i \mid i<\omega)$ of objects and a morphism $\varphi \in \Ac (\bigoplus_{i=1}^nM_i, \bigoplus_{j<\omega}N_j)$ such that $\rho_j \circ \varphi \ne 0$ for each $j<\omega$. Since 
$\omega= \bigcup_{i=1}^n\{j<\omega \mid \rho_j \circ \varphi \circ \nu_i \ne 0\}$ there exists $i$
for which the set $\{j<\omega\mid \rho_j \circ \varphi \circ \nu_i \ne 0\}$ is infinite, hence
$M_i$ is not $\Cc$-compact by applying Theorem~\ref{e.c.}.

Similarly, suppose that $\alpha^{c}$ is the cokernel of $\alpha \in \Ac(M, C)$, where $\Coker(\alpha) $ is not $\Cc$-compact, and $\varphi \in \Ac(\Coker(\alpha), \bigoplus_{j<\omega}N_j)$ for $(N_i \mid i<\omega)$ satisfies $\rho_j \circ \varphi\ne 0$ for every $j<\omega$. Then, obviously, $\rho_j \circ \varphi \circ \pi \ne 0$ for each $j<\omega$ and so $C$ is not $\Cc$-compact
again by Theorem~\ref{e.c.}.
\end{proof}

\begin{lm} 
If $\M$ is an infinite system of objects in $\Ac$ satisfying that for each $M\in\M$ there exists $C\in\Cc$ such that $\Ac(M,C)\ne 0$, then $\bigoplus \M$ is not $\Cc$-compact.
\end{lm}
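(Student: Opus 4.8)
The claim is that an infinite family $\M$, each of whose members admits a nonzero morphism to some object of $\Cc$, cannot have $\Cc$-compact coproduct $\bigoplus\M$. My plan is to produce, directly, a system $\Nc_\omega$ and a morphism violating condition (2) of Theorem~\ref{e.c.}, thereby witnessing non-$\Cc$-compactness. First I would choose, for each $M\in\M$, a nonzero morphism $g_M\in\Ac(M,C_M)$ with $C_M\in\Cc$; since $\M$ is infinite I may extract a countably infinite subfamily $\M_\omega=(M_j\mid j<\omega)$ and set $\Nc_\omega:=(C_{M_j}\mid j<\omega)$, a countably infinite system of objects of $\Cc$. The idea is to assemble the maps $g_{M_j}$ into a single morphism out of $\bigoplus\M$ that hits every coordinate of $\bigoplus\Nc_\omega$ nontrivially.

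The key construction is as follows. For each $j<\omega$ I have $\nu_{C_{M_j}}\circ g_{M_j}\in\Ac(M_j,\bigoplus\Nc_\omega)$, where $\nu_{C_{M_j}}$ is the structural morphism of the coproduct $\bigoplus\Nc_\omega$ into the $j$-th summand. By the universal property of the coproduct $\bigl(\bigoplus\M,(\nu_M\mid M\in\M)\bigr)$, applied to the cone whose component at $M_j$ is $\nu_{C_{M_j}}\circ g_{M_j}$ and whose component at every $M\in\M\setminus\M_\omega$ is $0$, there is a unique $\varphi\in\Ac(\bigoplus\M,\bigoplus\Nc_\omega)$ with $\varphi\circ\nu_{M_j}=\nu_{C_{M_j}}\circ g_{M_j}$ for all $j<\omega$. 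It then remains to check that $\rho_{C_{M_j}}\circ\varphi\ne 0$ for every $j$, where $\rho_{C_{M_j}}$ is the associated morphism from Lemma~\ref{AssMorphAbelianCats}(ii). Composing with the structural morphism $\nu_{M_j}$ of $\bigoplus\M$ gives
\begin{equation*}
\rho_{C_{M_j}}\circ\varphi\circ\nu_{M_j}=\rho_{C_{M_j}}\circ\nu_{C_{M_j}}\circ g_{M_j}=1_{C_{M_j}}\circ g_{M_j}=g_{M_j}\ne 0,
\end{equation*}
using $\rho_{C_{M_j}}\circ\nu_{C_{M_j}}=1_{C_{M_j}}$ from Lemma~\ref{AssMorphAbelianCats}(ii) together with the orthogonality of distinct structural/associated pairs. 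In particular $\rho_{C_{M_j}}\circ\varphi\ne 0$ for each $j<\omega$, so by Theorem~\ref{e.c.}$(2)\Rightarrow(1)$ the object $\bigoplus\M$ is not $\Cc$-compact.

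I expect the only genuine subtlety to lie in the indexing of $\Nc_\omega$: a priori the chosen codomains $C_{M_j}$ need not be pairwise distinct, so I should treat $\Nc_\omega$ as an honest family indexed by $j<\omega$ (a diagram from the index set $\omega$, in the terminology of the paper) rather than as a subclass of $\Cc$, which is exactly what Theorem~\ref{e.c.} permits. With that understood, the coordinate computation above is the whole content of the argument, and every morphism used is supplied by the two introductory lemmas. Hence the statement follows at once from Theorem~\ref{e.c.}.
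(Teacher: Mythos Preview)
Your proposal is correct and takes essentially the same approach as the paper. The paper's proof is a single sentence (``take $\Nc=(C_M\mid M\in\M)$ and apply Theorem~\ref{e.c.}(ii)''), and you have simply spelled out the construction of the witnessing morphism $\varphi$ via the universal property of $\bigoplus\M$ and verified the nonvanishing of each $\rho_{C_{M_j}}\circ\varphi$; your observation about treating $\Nc_\omega$ as an indexed family rather than a subclass is a helpful clarification that the paper leaves implicit.
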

\begin{proof}
 It is enough to take $\Nc=( C_M \mid  M\in \M)$ where $\Ac(M,C_M)\ne 0$ and apply Theorem~\ref{e.c.}(ii).
\end{proof}

We obtain the following consequence immediately:

\begin{cor} 
Let $\M$ be a system of objects of $\Ac$. Then $\bigoplus \M$ is $\Cc$-compact if and only if all $M\in\M$ are $\Cc$-compact and there exists $C_M\in\Cc$ such that $\Ac(M,C_M)\ne 0$ for only finitely many $M\in\M$.
\end{cor}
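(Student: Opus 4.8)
The plan is to establish the corollary by combining the preceding lemma with the closure properties from the earlier lemma on finite direct sums, handling the two directions of the equivalence separately. The statement asserts that $\bigoplus\M$ is $\Cc$-compact if and only if (a) every $M\in\M$ is $\Cc$-compact and (b) the set $\{M\in\M \mid \exists\, C\in\Cc \text{ with } \Ac(M,C)\ne 0\}$ is finite. I expect the reverse implication to require slightly more bookkeeping than the forward one, but neither should be a serious obstacle given the machinery already in place.

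For the forward direction, I would argue by contraposition on each of the two conditions. First suppose some $M_0\in\M$ fails to be $\Cc$-compact. Since $M_0$ is a direct summand of $\bigoplus\M$, the structural morphism $\nu_{M_0}$ and its associated morphism $\rho_{M_0}$ from Lemma~\ref{AssMorphAbelianCats}(ii) split $M_0$ off, and a witness $\varphi\in\Ac(M_0,\bigoplus\Nc_\omega)$ from Theorem~\ref{e.c.}(ii) composes with $\rho_{M_0}$ to give a witness $\varphi\circ\rho_{M_0}\in\Ac(\bigoplus\M,\bigoplus\Nc_\omega)$ satisfying $\rho_N\circ(\varphi\circ\rho_{M_0})\ne 0$ for all $N\in\Nc_\omega$; hence $\bigoplus\M$ is not $\Cc$-compact. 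Second, suppose condition (b) fails, i.e. the set of those $M\in\M$ admitting a nonzero morphism into some $C\in\Cc$ is infinite. Then restricting to this infinite subfamily $\M'\subseteq\M$, the previous lemma applies directly to show $\bigoplus\M'$ is not $\Cc$-compact, and since $\bigoplus\M'$ is a direct summand of $\bigoplus\M$, the same splitting argument propagates non-$\Cc$-compactness up to $\bigoplus\M$.

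For the reverse direction, assume both (a) and (b) hold. By (b) there is a finite subfamily $\Fc\subseteq\M$ containing every $M$ that admits a nonzero morphism into an object of $\Cc$; write $\M=\Fc\sqcup\M''$ where every $M\in\M''$ satisfies $\Ac(M,C)=0$ for all $C\in\Cc$. Using the canonical decomposition $\bigoplus\M\simeq(\bigoplus\Fc)\oplus(\bigoplus\M'')$ from Lemma~\ref{StructAssandCtPMorphCommuteAbelianCats}(iii), I would first observe that $\bigoplus\Fc$ is $\Cc$-compact: it is a finite direct sum of the $\Cc$-compact objects in $\Fc$ (each $\Cc$-compact by (a)), and the earlier lemma guarantees closure of $\Cc$-compactness under finite direct sums. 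Next I would show $\bigoplus\M''$ is $\Cc$-compact by checking criterion (2) of the identification lemma vacuously: any $f\in\Ac(\bigoplus\M'',\bigoplus\Nc)$ with $\Nc\subseteq\Cc$ composes via $\rho_M$ with each summand, and since $\Ac(M,C)=0$ for all $M\in\M''$ and $C\in\Cc$, every component $\rho_N\circ f\circ\nu_M$ vanishes, forcing $f=0$ and making the empty subfamily $\Fc$ work. Finally, to conclude that the direct sum $(\bigoplus\Fc)\oplus(\bigoplus\M'')$ of two $\Cc$-compact objects is again $\Cc$-compact, I invoke the finite-direct-sum closure once more.

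The main point requiring care is the reverse direction's treatment of $\bigoplus\M''$: one must verify that the hypothesis $\Ac(M,C)=0$ for all $M\in\M''$, $C\in\Cc$, genuinely kills every morphism from $\bigoplus\M''$ into a coproduct $\bigoplus\Nc$ of objects of $\Cc$, not merely into a single $C$. This is where Theorem~\ref{e.c.}(ii) is cleanest: if $\bigoplus\M''$ were not $\Cc$-compact it would admit a $\varphi\in\Ac(\bigoplus\M'',\bigoplus\Nc_\omega)$ with $\rho_N\circ\varphi\ne 0$ for every $N$, and composing with some $\nu_M$ ($M\in\M''$) via the universal property of the coproduct would produce a nonzero element of $\Ac(M,N)$ with $N\in\Cc$, contradicting the defining property of $\M''$. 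So I would phrase this step using Theorem~\ref{e.c.} rather than the vacuous-criterion argument, as it is the more robust route.
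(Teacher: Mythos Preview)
Your proposal is correct and follows exactly the route the paper intends: the corollary is stated as an immediate consequence of the two preceding lemmas (closure under finite direct sums/cokernels, and the non-$\Cc$-compactness of an infinite coproduct whose summands all map nontrivially into $\Cc$), and your argument is precisely the unpacking of that implication. The only remark is that for the forward direction, part (a), you could alternatively invoke the cokernel closure directly, since each $M\in\M$ is the cokernel of the structural morphism $\nu_{\M\setminus\{M\}}$; but your Theorem~\ref{e.c.}-based argument is equally valid.
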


Let us confirm that relativized compactness behaves well under taking finite unions of classes and verify with an example that this closure property can not be extended to an infinite case. 

\begin{lm} 
	Let $\Cc_1,\dots,\Cc_n$ be a finite number of classes of objects and let $C \in \Ac$. Then $C$ is $\bigcup_{i=1}^n\Cc_i$-compact if and only it $C$ is $\Cc_i$-compact for every $i\le n$.
\end{lm}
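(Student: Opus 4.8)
The statement is an if-and-only-if, so I would prove the two implications separately, and I expect the forward direction (from $\bigcup_{i=1}^n\Cc_i$-compactness to each $\Cc_i$-compactness) to be essentially trivial by monotonicity, while the reverse direction (from all the $\Cc_i$-compactness to $\bigcup_{i=1}^n\Cc_i$-compactness) is the substantive one. The whole argument should reduce to Theorem~\ref{e.c.} combined with a pigeonhole argument over the finite index set $\{1,\dots,n\}$, in close analogy with the proof that the class of $\Cc$-compact objects is closed under finite direct sums.

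For the easy direction I would argue that each $\Cc_i$ is a subclass of $\bigcup_{i=1}^n\Cc_i$, and that $\Cc$-compactness is obviously monotone in $\Cc$: if $M$ is $\Cc'$-compact and $\Cc\subseteq\Cc'$, then every family $\Nc\subseteq\Cc$ is also a family in $\Cc'$, so the map $\Psi_\Nc$ is an isomorphism. Hence $\bigcup_{i=1}^n\Cc_i$-compactness of $C$ immediately yields $\Cc_i$-compactness for each $i\le n$.

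For the reverse direction I would argue by contraposition, mirroring the finite-direct-sums lemma. Suppose $C$ is \emph{not} $\bigcup_{i=1}^n\Cc_i$-compact. By Theorem~\ref{e.c.} there is a countably infinite system $\Nc_\omega=(N_j\mid j<\omega)$ of objects of $\bigcup_{i=1}^n\Cc_i$ together with $\varphi\in\Ac(C,\bigoplus\Nc_\omega)$ such that $\rho_{N_j}\circ\varphi\ne 0$ for every $j<\omega$. Each $N_j$ lies in at least one $\Cc_i$, so assigning to every $j$ such an index gives a colouring of $\omega$ by finitely many colours; by the pigeonhole principle there is a single index $i\le n$ for which the set $J:=\{j<\omega\mid N_j\in\Cc_i\}$ is infinite. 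Restricting to the subsystem $(N_j\mid j\in J)$ — a countably infinite system of objects of $\Cc_i$ — and composing $\varphi$ with the structural morphism of the corresponding subcoproduct (using Lemma~\ref{StructAssandCtPMorphCommuteAbelianCats}(i) to verify that the relevant associated-to-structural compositions remain nonzero), I obtain a witness showing, via Theorem~\ref{e.c.} again, that $C$ is not $\Cc_i$-compact.

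The main obstacle is bookkeeping rather than conceptual: I must be careful that restricting $\varphi$ to the infinite subsystem indexed by $J$ preserves the condition $\rho_{N}\circ(\text{restricted }\varphi)\ne 0$ for the relevant structural projections. This is exactly where Lemma~\ref{AssMorphAbelianCats}(ii) and the identity $\rho_\Fc\circ\nu_\Fc=1$ from Lemma~\ref{StructAssandCtPMorphCommuteAbelianCats}(i) enter, guaranteeing that the composite $\rho_{N_j}\circ\varphi$ factors correctly through the subcoproduct $\bigoplus(N_j\mid j\in J)$ without the nonvanishing being destroyed. Once this compatibility is checked the example-based claim about the infinite case — that the union of infinitely many classes need not inherit compactness — is handled separately by the promised example and does not affect the proof of this lemma.
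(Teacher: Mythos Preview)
Your proposal is correct and follows essentially the same route as the paper: the forward direction is dismissed as trivial, and the reverse is proved by contraposition using Theorem~\ref{e.c.} together with a pigeonhole argument on the finite index set. The only difference is that you are more explicit about the bookkeeping step of restricting $\varphi$ to the subcoproduct indexed by $J$ and checking that $\orho_{N_j}\circ(\rho_{\Nc_J}\circ\varphi)=\rho_{N_j}\circ\varphi\ne 0$, whereas the paper simply asserts that $C$ is not $\Cc_k$-compact ``by Theorem~\ref{e.c.}'' without spelling this out.
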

\begin{proof} 
	The direct implication is trivial. If $C$ is not $\bigcup_{i=1}^n\Cc_i$-compact, there exists a sequence $(C_i \mid i<\omega)$ of objects of $\bigcup_{i=1}^n\Cc_i$ with a morphism $\varphi \in \Ac(C, \bigoplus_{j<\omega}B_j)$ such that $\rho_j \circ \varphi \ne 0$ for every $j<\omega$ by Theorem~\ref{e.c.}. Since there exists $k\le n$ for which infinitely many $C_i$'s belong to $\Cc_j$ we can see that $C$ is not $\Cc_j$-compact by Theorem~\ref{e.c.}.
\end{proof}

\begin{exm}\label{e0} 
Let $R$ be a ring over which there is an infinite set of non-isomorphic simple right modules. Any non-artinian Von Neumann regular ring serves as an example where the property holds. Suppose that $\Ac$ is the full subcategory of category consisting of all semisimple right modules, which is generated by all simple modules. Fix a countable sequence $S_i$, $i<\omega$, of pair-wisely non-isomorphic simple modules. Then the module $\bigoplus_{i<\omega}S_i$ is $\{S_i\}$-compact for each $i$ but it is not $\bigcup_{i < \omega} \{S_i\}$-compact.
\end{exm}

Relative compactness of an object is preserved if we close the class under all cogenerated objects.

\begin{lm} 
Let $Cog(\Cc)$ be the class of all objects cogenerated by $\Cc$. Then every $\Cc$-compact object is $Cog(\Cc)$-compact.
\end{lm}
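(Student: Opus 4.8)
The plan is to argue by contraposition using the characterization of non-compactness in Theorem~\ref{e.c.}. Recall that $Cog(\Cc)$ denotes the class of objects admitting a monomorphism into a product of objects of $\Cc$, and that $\Cc\subseteq Cog(\Cc)$, so $Cog(\Cc)$-compactness is a priori the stronger property. Assuming that $M$ fails to be $Cog(\Cc)$-compact, I would invoke the equivalence $(1)\Leftrightarrow(2)$ of Theorem~\ref{e.c.} applied to the class $Cog(\Cc)$ to obtain a countably infinite system $(D_n\mid n<\omega)$ of objects of $Cog(\Cc)$ together with a morphism $\varphi\in\Ac(M,\bigoplus_n D_n)$ satisfying $\rho_n\circ\varphi\ne0$ for every $n<\omega$, where $\rho_n$ is the associated morphism of Lemma~\ref{AssMorphAbelianCats}(ii). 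The goal is then to manufacture from these data a countably infinite system of objects lying genuinely in $\Cc$ that witnesses the failure of $\Cc$-compactness of $M$.

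Next, for each fixed $n$ I would exploit the cogeneration hypothesis on $D_n$: there is a monomorphism $\iota_n\colon D_n\to\prod_{k}C_{n,k}$ with all $C_{n,k}\in\Cc$. Since $\iota_n$ is monic and $\rho_n\circ\varphi\ne0$, the composite $\iota_n\circ\rho_n\circ\varphi$ is nonzero; as a morphism into a product vanishes exactly when all its composites with the product projections vanish, there is an index $k=k(n)$ for which $g_n:=\pi_{k}\circ\iota_n\colon D_n\to C_n$, with $C_n:=C_{n,k}\in\Cc$, satisfies $g_n\circ\rho_n\circ\varphi\ne0$. This is the step that actually extracts objects of $\Cc$ from objects merely cogenerated by $\Cc$, and it is where the cogeneration hypothesis is consumed.

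It then remains to reassemble the individual morphisms $g_n$ over the coproduct. Using the universal property of $\bigoplus_n D_n$ applied to the family $(\nu_n^{C}\circ g_n\mid n<\omega)$, where $\nu_n^{C}$ are the structural morphisms of $\bigoplus_n C_n$, I obtain a unique $G\in\Ac(\bigoplus_n D_n,\bigoplus_n C_n)$ with $G\circ\nu_n^{D}=\nu_n^{C}\circ g_n$ for all $n$, where $\nu_n^{D}$ are the structural morphisms of $\bigoplus_n D_n$. Setting $\psi:=G\circ\varphi\in\Ac(M,\bigoplus_n C_n)$, I would verify the identity $\rho_n^{C}\circ G=g_n\circ\rho_n$ for each $n$, whence $\rho_n^{C}\circ\psi=g_n\circ\rho_n\circ\varphi\ne0$. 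The system $(C_n\mid n<\omega)$ of objects of $\Cc$ together with $\psi$ then satisfies condition $(2)$ of Theorem~\ref{e.c.} for the class $\Cc$, so $M$ is not $\Cc$-compact, which is the desired contrapositive.

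I expect the main obstacle to be the bookkeeping behind the identity $\rho_n^{C}\circ G=g_n\circ\rho_n$: both sides must be compared after precomposition with every structural morphism $\nu_m^{D}$, where the orthogonality relations $\rho_n^{C}\circ\nu_m^{C}=\delta_{mn}\,1_{C_n}$ and $\rho_n\circ\nu_m^{D}=\delta_{mn}\,1_{D_n}$ of Lemma~\ref{AssMorphAbelianCats}(ii) force the two composites to agree, after which the uniqueness clause of the universal property of $\bigoplus_n D_n$ closes the argument. Notably, this route never passes through a coproduct of monomorphisms and so needs no exactness assumption beyond the completeness of $\Ac$ already in force.
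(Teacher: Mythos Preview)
Your proof is correct and follows essentially the same route as the paper: argue by contraposition via Theorem~\ref{e.c.}, extract for each $n$ a nonzero composite $D_n\to C_n$ with $C_n\in\Cc$, assemble these into a diagonal morphism $G\colon\bigoplus D_n\to\bigoplus C_n$, and verify $\rho_n^{C}\circ G\circ\varphi\ne 0$. The only tactical difference is in how the maps $g_n$ are chosen: the paper first replaces each $D_n$ by the image of $\rho_n\circ\varphi$ (using that $Cog(\Cc)$ is closed under subobjects) so that $\rho_n\circ\varphi$ becomes an epimorphism and \emph{any} nonzero $\tau_n\colon D_n\to T_n$ with $T_n\in\Cc$ does the job, whereas you keep $D_n$ unchanged and instead select a specific projection $g_n=\pi_k\circ\iota_n$ from the cogenerating monomorphism that already detects $\rho_n\circ\varphi$; your variant is marginally more direct since it avoids the image-replacement step.
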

\begin{proof} 
Let us suppose that an object $C$ is not $Cog(\Cc)$-compact and fix a sequence $\Bc:= (B_i \mid i<\omega)$ of objects of $Cog(\Cc)$ and a morphism $\varphi \in \Ac (C, \bigoplus \Bc)$ such that $\rho_j \circ \varphi\ne 0$ for each $j<\omega$ which exists by Theorem~\ref{e.c.}. Since $Cog(\Cc)$ is closed under subobjects we may suppose that $\rho_j \circ \varphi$ are epimorphisms. Furthermore, for every $j<\omega$ there exists a non-zero morphism $\tau_j \in \Ac(B_j, T_j)$ with $T_j\in\Cc$. Form the sequence $\Tc:=(T_i \mid i<\omega)$.  Let $\tau$ be the uniquely defined morphism from $\Ac(\bigoplus \Bc, \bigoplus \Tc)$ satisfying $\tau \circ \nu_j = \overline{\nu}_j \circ \tau_j$. Then $\overline{\rho}_j\circ \tau \circ \nu_i=\overline{\rho}_j \circ \overline{\nu}_i \circ \tau_i$ which is equal to $\tau_i$  whenever $i=j$ and it is zero otherwise, hence $\overline{\rho}_i\circ \tau \circ \nu_i \circ \rho_i=\overline{\rho}_i \circ \tau$ by the universal property of $\bigoplus \Bc$. Finally, since $\rho_i \circ \varphi$ is an epimorphism and $\tau_i$ is non-zero $\tau_i \circ \rho_i \circ \varphi \ne 0$ and so
\begin{equation*}
\overline{\rho}_j \circ \tau \circ \varphi = \overline{\rho}_i \circ \tau \circ \nu_i \circ \rho_i \circ \varphi = \overline{\rho}_i \circ \overline{\nu}_i \circ \tau_i \circ \rho_i \circ \varphi = \tau_i \circ \rho_i \circ \varphi \ne 0
\end{equation*}
for every $i<\omega$. Thus the composition $\tau \circ \varphi$ witnesses that $C$ is not $\Cc$-compact again by Theorem~\ref{e.c.}.
\end{proof}

A complete abelian category $\Ac$ is called \emph{$\Cc$-steady}, if there exists an $\Ac$-projective $\Cc$-compact object $G$ which finitely generates the class of all $\Cc$-compact objects, i.e. for every $\Cc$-compact object $F$ there exists $n\in \mathbb N$ and a homomorphism $h \in \Ac(G^{(n)}, F)$ such that cokernel of $h$ is isomorphic to $F$. $\Ac$ is said to be \emph{steady} whenever it is an $\Ac^{o}$-steady category for the class $\Ac^{o}$ of all objects of $\Ac$.

\begin{exm}
If $R$ is a right steady ring and $\Ac=\Mod$-$R$ is the category of all right $R$-modules, then $\Cc$-compact objects are precisely dually slender modules and $\Ac$ is a steady category.
\end{exm}

Furthermore, in \cite[Theorem 1.7]{GomMilNas94} it was proved that a locally noetherian Grothendieck category is steady.

Recall that an object $A$ is \emph{simple} if for every $B$ and any non-zero morphism from $\Ac(A,B)$ is a monomorphism and an object is \emph{semisimple} if it is isomorphic do direct sum of simple objects.
We characterize steadiness of categories such that all their object are semisimple.

\begin{lm} 
	Let $\Ac$ be generated by a class of simple objects $\mathcal S$. Then $\Ac$ is $\Cc$-steady if and only if there are only finitely many non-isomorphic simple objects $S$ such that $\Hom(S,C)\ne 0$ for some object $C\in \Cc$. 
\end{lm}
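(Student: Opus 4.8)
The plan is to reduce everything to the semisimple structure of $\Ac$ together with the description of $\Cc$-compact coproducts already obtained. First I would record the elementary consequences of the hypothesis that $\Ac$ is generated by simple objects. By the definition of simplicity every nonzero morphism out of a simple object is a monomorphism, and a proper nonzero subobject of a simple $S$ would give a nonzero non-monic quotient map; hence simple objects have no proper nonzero subobjects and a Schur-type argument yields $\Ac(S,T)=0$ for non-isomorphic simple $S,T$. Moreover every object of $\Ac$ is a coproduct of simple objects, so every short exact sequence splits; in particular every object is $\Ac$-projective and every epimorphism splits. Write $\Sc_\Cc$ for the class of isomorphism types of simple objects $S$ with $\Ac(S,C)\neq 0$ for some $C\in\Cc$; decomposing such a $C$ into simples and applying Schur shows that, for a simple $U$, one has $\Ac(U,C)\neq0$ for some $C\in\Cc$ if and only if $U\in\Sc_\Cc$.

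The central lemma I would prove is that \emph{every simple object $S$ is $\Cc$-compact} (in fact compact). Using the equivalent conditions for $\Cc$-compactness established above, it suffices to show that each $f\in\Ac(S,\bigoplus\Nc)$ factors through a finite subcoproduct $\bigoplus\Fc$. Since $S$ is simple, a nonzero $f$ is a monomorphism with simple image, and after decomposing $\bigoplus\Nc$ into simples Schur confines this image to the isotypic copies of $S$; the claim then reduces to the statement that a simple subobject of a coproduct lands in a finite subcoproduct. Combining this with the Corollary characterising $\Cc$-compact coproducts, I obtain the working description I will use throughout: a semisimple object $M\simeq\bigoplus_\alpha U_\alpha$ with the $U_\alpha$ simple is $\Cc$-compact if and only if only finitely many of the $U_\alpha$ belong to $\Sc_\Cc$.

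For the implication ``$\Sc_\Cc$ finite $\Rightarrow\Cc$-steady'' I would take $G$ to be the coproduct of one copy of each isomorphism type of simple object of $\Ac$. Then $G$ is $\Ac$-projective; by the description above it is $\Cc$-compact precisely because only finitely many of its simple summands lie in the finite class $\Sc_\Cc$; and since $G$ contains every simple type, every semisimple object, in particular every $\Cc$-compact object, is an epimorphic image of a coproduct of copies of $G$, so $G$ witnesses $\Cc$-steadiness. For the converse, suppose $G$ witnesses $\Cc$-steadiness. As $G$ is $\Cc$-compact, the description shows it has only finitely many simple summands in $\Sc_\Cc$, hence finitely many isomorphism types there. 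Each $S\in\Sc_\Cc$ is simple, hence $\Cc$-compact by the central lemma, so by steadiness it is an epimorphic image of a power of $G$; this epimorphism splits by semisimplicity, so $S$ is a direct summand of that power and therefore isomorphic to one of the simple summand types of $G$. Being in $\Sc_\Cc$, it is among the finitely many such types, whence $\Sc_\Cc$ is finite.

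The main obstacle I anticipate is the central lemma, that is, establishing the finite-support phenomenon ``a simple subobject of a coproduct lies in a finite subcoproduct'' for an abstract (not necessarily Ab5) abelian category closed under products and coproducts; one must extract it from semisimplicity and the structural morphisms of Lemmas~\ref{AssMorphAbelianCats} and~\ref{StructAssandCtPMorphCommuteAbelianCats} rather than from an elementwise notion of support, and it may be cleanest to argue the contrapositive through Theorem~\ref{e.c.}. A secondary delicate point lies in the backward construction: since simple objects \emph{outside} $\Sc_\Cc$ are themselves $\Cc$-compact and may occur with arbitrary multiplicity inside a $\Cc$-compact object, the generator $G$ is forced to contain all simple types, and it is exactly the finiteness of $\Sc_\Cc$ that reconciles this with the $\Cc$-compactness of $G$ demanded by the definition of steadiness.
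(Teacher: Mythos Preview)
Your forward implication is close in spirit to the paper's: both hinge on the fact that simple objects are $\Cc$-compact and on the characterisation of $\Cc$-compact coproducts, and the paper's own proof of this direction is brief and essentially uses, as you do, that a witness $G$ would be forced to contain infinitely many non-isomorphic simples from $\Sc_\Cc$ as summands, contradicting the $\Cc$-compactness of $G$. (The paper in fact only writes out this one direction.)

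The backward implication, however, contains a genuine gap. The definition of $\Cc$-steady requires that every $\Cc$-compact object be an epimorphic image of a \emph{finite} power $G^{(n)}$, whereas you only observe that it is an epimorphic image of some (possibly infinite) coproduct of copies of $G$. This is not a detail that can be patched with your choice of $G$: suppose there exists a simple object $S\notin\Sc_\Cc$, i.e.\ $\Ac(S,C)=0$ for all $C\in\Cc$. Then, by the very description you establish, $S^{(\kappa)}$ is $\Cc$-compact for every cardinal $\kappa$. But your $G$ contains exactly one copy of $S$, so $G^{(n)}$ contains $n$ copies, and by Schur no morphism $G^{(n)}\to S^{(\kappa)}$ can be an epimorphism once $\kappa>n$. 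Worse, the same counting shows that \emph{no} object $G$ whatsoever can do the job: a fixed $G$ has a fixed number of $S$-summands, so $G^{(n)}$ can never surject onto $S^{(\kappa)}$ for $\kappa$ large. Thus your argument for the sufficiency of finiteness of $\Sc_\Cc$ breaks down precisely when there are simple objects outside $\Sc_\Cc$; note that the paper does not supply a proof of this direction either. A secondary issue is that taking $G$ to be the coproduct over all isomorphism types of simples presupposes that these types form a set, which the hypothesis does not guarantee.
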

\begin{proof} 
	If there exist infinitely many non-isomorphic simple objects $S$ such that $\Ac(S, T_S)\ne 0$ for suitable $T_S \in \Cc$, it is enough to take countably infinite family of such simple objects $\Sc := (S_i \mid i<\omega)$ and non-zero morphisms $\tau_i \in \Ac(S_i, T_{S_i})$ where $T_{S_i}\in \Cc$ are collected into $\Tc := (T_{S_i} \mid i<\omega)$.  Then the morphism $\tau \in \Ac(\bigoplus \Sc, \bigoplus \Tc)$ defined by the universal property of a direct sum witnesses that $\bigoplus \Sc$ is not $\Cc$-compact. Since it is easy to see that for every epimorphism from $\Ac(A, \bigoplus \Sc)$, the object $A$ is not $\Cc$-compact as well, yielding that the category $\Ac$ is not steady.
\end{proof}

\begin{exm} 
	Let $\Ac$ be a category semisimple right modules over ring with infinite set of non-isomorphic simple right modules as in Example~\ref{e0}. Then $\Ac$ is not steady, and if the ring $R$ is right steady, which is true for example for each countable commutative regular ring,
the category of all right $R$-modules $\Mod$-$R$ steady.
\end{exm}

We say that a complete abelian category $\Ac$ is $\prod\Cc$-compactly generated if there is a set $\Gc$ of objects of $\Ac$ that generates $\Ac$ and the product of any system of objects in $\Gc$ is $\Cc$-compact. Note that $\Gc$ consists only of $\Cc$-compact objects. 

\begin{lm} 
If $E$ is a  $\Cc$-compact injective generator of $\Ac$ such that there exists a monomorphism  $m \in \Ac(E^{(\omega)}, E)$, then $\Ac$ is $\prod\Cc$-compactly generated. 
\end{lm}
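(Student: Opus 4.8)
The plan is to take $\Gc=\{E\}$ as the generating set. Since $E$ is a generator, $\{E\}$ generates $\Ac$, and every system of objects from $\{E\}$ is a family of copies of $E$, whose product is a power $E^I:=\prod_{i\in I}E$ for some set $I$. Thus it suffices to prove that each such $E^I$ is $\Cc$-compact, and I would do this by \emph{negating} condition~(3) of Theorem~\ref{e.c.}, rather than arguing from the plain definition.

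First I would record the two structural facts that make the argument run. As $E$ is injective and $\Ac$ is complete, the product $E^I$ is again injective, because $\Ac(-,E^I)\cong\prod_{i\in I}\Ac(-,E)$ is a product of exact functors, hence exact. Secondly, since $E$ is a generator there is a family $\Gc'$ of copies of $E$ together with an epimorphism $e\in\Ac(\bigoplus\Gc',E^I)$; every member of $\Gc'$ is the $\Cc$-compact object $E$. This is precisely the data appearing in condition~(3).

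The heart of the proof is the verification that for this fixed $e$ no countable subsystem can satisfy the conclusion of~(3). So fix an arbitrary countable subsystem $\Gc_\omega\subseteq\Gc'$. Its structural morphism $\nu_{\Gc_\omega}$ is a split monomorphism by Lemma~\ref{StructAssandCtPMorphCommuteAbelianCats}(i), and $\bigoplus\Gc_\omega$ is a coproduct of at most countably many copies of $E$, so it admits a monomorphism into $E^{(\omega)}$; composing with the given monomorphism $m\in\Ac(E^{(\omega)},E)$ yields a monomorphism $m'\in\Ac(\bigoplus\Gc_\omega,E)$. Now I would use injectivity of $E^I$ to extend $e\circ\nu_{\Gc_\omega}$ along $m'$: there is $f\in\Ac(E,E^I)$ with $f\circ m'=e\circ\nu_{\Gc_\omega}$. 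Since $F:=E$ is $\Cc$-compact and $f^c\circ f=0$ for the cokernel $f^c$ of $f$, we obtain $f^c\circ e\circ\nu_{\Gc_\omega}=f^c\circ f\circ m'=0$. As $\Gc_\omega$ was arbitrary, condition~(3) fails for $E^I$, so $E^I$ is $\Cc$-compact by Theorem~\ref{e.c.}; this holds for every $I$, whence $\Ac$ is $\prod\Cc$-compactly generated.

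I expect the main obstacle to be conceptual rather than computational: one must resist the temptation to argue from the plain definition (a map out of $E^I$) and instead pass to characterization~(3), which relocates the relevant coproduct to the source $\bigoplus\Gc'$. Only there do the three hypotheses interact, the embedding $m$ turning a countable subcoproduct of copies of $E$ into a subobject of the single injective-and-compact object $E$, along which the injectivity of $E^I$ produces the factorization $f$. A minor point to check carefully is the reduction of $\bigoplus\Gc_\omega$ to a subobject of $E$ in the degenerate case where $\Gc_\omega$ is finite, which is harmless since finite subcoproducts embed into $E^{(\omega)}$ as well.
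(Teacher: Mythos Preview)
Your proof is correct and is precisely the detailed unfolding of the paper's one-line argument, which simply says ``It follows immediately from Theorem~\ref{e.c.}(iii).'' You have identified the intended mechanism: take $\Gc=\{E\}$, note that $E^I$ is injective, and for a given epimorphism $e\colon\bigoplus\Gc'\to E^I$ and any countable $\Gc_\omega$ use the monomorphism $m$ to embed $\bigoplus\Gc_\omega$ into the single $\Cc$-compact object $E$, then extend along this embedding by injectivity of $E^I$ to obtain the required $f$ witnessing the failure of condition~(3).
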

\begin{proof}
It follows immediately from Theorem~\ref{e.c.}(iii).
\end{proof}

\begin{exm}
Let $R$ be a right self-injective, purely infinite ring. Then $E:=R$ is an injective generator and there is an embedding $0 \to R^{(\omega)} \to R$. By the previous lemma, the category $\Mod$-$R$ is $\prod \Cc$-generated.
\end{exm}

\section{Products of compact objects}

We start the section by an observation that the cokernel of the compatible coproduct-to-product morphism over a countable family is $\Cc$-compact where $\Cc$ is a class of objects in an abelian category $\Ac$. This initial step will be later extended to families regardless of their cardinality.

\begin{lm}\label{CokCtPMorphCountIsComp}
	Let $\Ac$ be $\prod\Cc$-compactly generated and let $\M$ be a countable family of objects in $\Ac$. If $t \in \Ac \left( \bigoplus\M, \prod\M \right)$ is the compatible coproduct-to-product morphism, then $\Coker(\nu)$ is $\Cc$-compact.
\end{lm}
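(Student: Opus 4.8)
The plan is to argue by contradiction, transporting a hypothetical witness of non-$\Cc$-compactness of $\Coker(t)$ onto a genuine product of generators, where the defining property of being $\prod\Cc$-compactly generated kills it. Fix the generating set $\Gc$ all of whose products are $\Cc$-compact, write $\M=(M_i\mid i<\omega)$ and let $p\in\Ac(\prod\M,\Coker(t))$ be the canonical epimorphism. Suppose $\Coker(t)$ is not $\Cc$-compact. By Theorem~\ref{e.c.} there are a countable system $\Nc_\omega=(N_j\mid j<\omega)$ of objects of $\Cc$ and $\varphi\in\Ac(\Coker(t),\bigoplus\Nc_\omega)$ with $\rho_{N_j}\circ\varphi\ne0$ for every $j$. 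I would set $g:=\varphi\circ p\in\Ac(\prod\M,\bigoplus\Nc_\omega)$; since $p$ is epi we still have $\rho_{N_j}\circ g\ne0$ for all $j$, while $g\circ t=\varphi\circ(p\circ t)=0$.

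The first step is to observe that $g$ factors through every tail of the product. Because $t\circ\nu_M=\mu_M$ by Lemma~\ref{AssMorphAbelianCats}(iii), the relation $g\circ t=0$ yields $g\circ\mu_M=0$ for each $M\in\M$. For $n<\omega$ let $\M_{<n}$ be the finite subfamily of the first $n$ members and let $\eta_n:=\mu_{\M_{<n}}\circ\pi_{\M_{<n}}$ be the associated idempotent on $\prod\M$. A routine computation with the structural and associated morphisms (Lemma~\ref{AssMorphAbelianCats}(ii) and Lemma~\ref{StructAssandCtPMorphCommuteAbelianCats}(i)) shows $g\circ\eta_n=0$ and $\pi_{M_i}\circ\eta_n=\pi_{M_i}$ for $i<n$, so that, writing $\sigma_{\ge n}:=1_{\prod\M}-\eta_n$, we get $g=g\circ\sigma_{\ge n}$ together with $\pi_{M_i}\circ\sigma_{\ge n}=0$ for all $i<n$.

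Next I would assemble a single morphism out of a product of generators that detects all the $N_j$ simultaneously. For each $j$, since $\rho_{N_j}\circ g\ne0$ and $\Gc$ generates $\Ac$, there are $G^{(j)}\in\Gc$ and $h_j\in\Ac(G^{(j)},\prod\M)$ with $\rho_{N_j}\circ g\circ h_j\ne0$. Choosing $n_j<\omega$ strictly increasing and replacing $h_j$ by $h_j':=\sigma_{\ge n_j}\circ h_j$ preserves $\rho_{N_j}\circ g\circ h_j'=\rho_{N_j}\circ g\circ h_j\ne0$ while forcing $\pi_{M_i}\circ h_j'=0$ for all $i<n_j$. Hence for each fixed $i$ only the finitely many indices $j$ with $n_j\le i$ contribute to the $i$-th coordinate, so $\pi_{M_i}\circ h:=\sum_{j:\,n_j\le i}(\pi_{M_i}\circ h_j')\circ\pi^{(j)}$ is a finite sum and, by the universal property of $\prod\M$, defines a morphism $h\in\Ac(\prod_{j<\omega}G^{(j)},\prod\M)$ with $h\circ\mu^{(j)}=h_j'$ for every $j$, where $\pi^{(j)}$ and $\mu^{(j)}$ are the structural and associated morphisms of $\prod_{j<\omega}G^{(j)}$.

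Finally, $\prod_{j<\omega}G^{(j)}$ is a product of members of $\Gc$, hence $\Cc$-compact by hypothesis, so $g\circ h$ factors through a finite subcoproduct of $\bigoplus\Nc_\omega$ and thus $\rho_{N_j}\circ g\circ h=0$ for all but finitely many $j$; but $\rho_{N_j}\circ g\circ h\circ\mu^{(j)}=\rho_{N_j}\circ g\circ h_j'\ne0$ gives $\rho_{N_j}\circ g\circ h\ne0$ for every $j$, a contradiction, so $\Coker(t)$ is $\Cc$-compact. I expect the crux to be exactly the construction of $h$: producing a morphism out of an honest \emph{product} of generators (not a coproduct) that witnesses infinitely many components at once. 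This is where countability of $\M$ is indispensable, since it is the strictly increasing tails $\sigma_{\ge n_j}$ with $n_j\to\infty$ that make the coordinatewise sums finite and convert the separate data $h_j'$ into one morphism to which $\Cc$-compactness of the product applies; the cokernel structure enters only through the factorization $g=g\circ\sigma_{\ge n}$.
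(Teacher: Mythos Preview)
Your proof is correct. Both your argument and the paper's hinge on the same essential observation: because $\M$ is countable, one can assemble a single morphism out of a \emph{product} of generators into $\prod\M$ whose coordinate maps $\pi_{M_i}\circ(-)$ each involve only finitely many of the generators, and then invoke the $\Cc$-compactness of that product. The difference is in the packaging. The paper argues directly via criterion~(3) of Theorem~\ref{e.c.}: given an epimorphism $e\colon\bigoplus\Gc\to\prod\M$ and an arbitrary countable $\Gc_\omega=(G_n)$, it builds $\alpha\colon\prod\Gc_\omega\to\prod\M$ by letting $\pi_{M_n}\circ\alpha$ depend only on the initial segment $\Gc_n$, and then shows that $t^c\circ\alpha\circ\mu_{G_k}$ agrees with $t^c\circ e\circ\wnu_{G_k}$ (the difference having only finitely many nonzero $M$-coordinates, hence lying in $\Img t$). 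You instead argue by contradiction via criterion~(2): you first choose a specific generator $G^{(j)}$ detecting each $\rho_{N_j}$, then truncate $h_j$ to a tail of $\M$ so that each target coordinate sees only finitely many sources, obtaining $h\circ\mu^{(j)}=h_j'$ on the nose. Your route is arguably a bit cleaner, since the idempotents $\sigma_{\ge n}$ make the cokernel relation $g\circ t=0$ transparent and avoid the somewhat delicate bookkeeping in the paper's verification that $t^c\circ(\alpha\circ\mu_{G_k}-e\circ\wnu_{G_k})=0$; the paper's route has the minor advantage of not invoking a contradiction.
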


\begin{proof} 
As for a finite $\M$ there is nothing to prove, suppose that $\M = (M_n\mid n<\omega)$. 
Let $\Gc$ be a family of objects of $\Ac$ such that every product of a system of objects in $\Gc$ is $\Cc$-compact and let $e\in\Ac(\bigoplus \Gc,\prod\M)$ be an epimorphism, which exists by the hypothesis.
Let $t^c$ be the cokernel of $t$. Then both $t^c$ and $e':= t^c \circ e$ are epimorphisms and $t^c \circ t=0$. We will show that for every countable subsystem $\Gc_\omega$ of $\Gc$ there exists a $\Cc$-compact object $F$ and a morphism $f\in\Ac(F,\Coker(t))$ such that $\Ac \left( \bigoplus \Gc_{\omega}, \Coker(f) \right)  \ni f^c \circ e' \circ \nu_{\Gc_\omega} = 0$ for the cokernel $f^c \in \Ac(\Coker(t), \Coker(f))$. By Theorem~\ref{e.c.} this yields that $\Coker(t)$ is $\Cc$-compact.

Since for any finite $\Gc_\omega \subseteq \Gc$ it is enough to take $F:=\bigoplus \Gc_\omega$ and $f:=e' \circ \nu_{\Gc_\omega}$, we may fix a countably infinite family $\Gc_\omega= (G_n \mid n<\omega) \subseteq \Gc$. For each $n<\omega$ put $\Gc_n=(G_i \mid i\le n)$ and let $\pi_{\Gc_n} \in \Ac \left( \prod\Gc_\omega, \prod\Gc_n \right)$ and $\pi_{M_n} \in \Ac \left( \prod\M, M_n \right)$ denote the structural morphisms, and let $\ou^{-1} \in \Ac \left(\prod\Gc_n, \bigoplus\Gc_n \right)$ be the inverse of the compatible coproduct-to-product morphism $\ou \in \Ac \left(\bigoplus\Gc_n, \prod\Gc_n\right)$ that exists for finite families.

First, let us fix $n \in \omega$ and we prove that $\nu_{G_k} = \nu_{\Gc_n} \circ \ou^{-1} \circ \pi_{\Gc_n} \circ \mu_{G_k}$.  Let $\onu_{G_k} \in \Ac(G_k, \bigoplus\Gc_n)$ be the structural morphism of the coproduct $\bigoplus\Gc_n$ and let  $\omu_{G_k}\in\Ac(G_k,\prod\Gc_n)$ be the associated morphism to the product $\prod\Gc_n$. Since $\nu_{\Gc_n} \circ \onu_{G_k}=\nu_{G_k}$ and  $\mu_{G_k} = u \circ \nu_{G_k}$  (by Lemma~\ref{AssMorphAbelianCats}(iii)), then we immediately infer the following equalities from Lemma~\ref{StructAssandCtPMorphCommuteAbelianCats}(ii) :
\begin{equation*}
\begin{split}
\nu_{G_k} &= \nu_{\Gc_n} \circ \onu_{G_k} = \nu_{\Gc_n} \circ (\ou^{-1} \circ \ou \circ \onu_{G_k}) = (\nu_{\Gc_n} \circ \ou^{-1} ) \circ \ou \circ \onu_{G_k} = \\
 &= (\nu_{\Gc_n} \circ \ou^{-1} ) \circ (\pi_{\Gc_n} \circ u \circ \nu_{\Gc_n}) \circ \onu_{G_k} = (\nu_{\Gc_n} \circ \ou^{-1} ) \circ \pi_{\Gc_n} \circ u \circ \nu_{G_k} = \\
 &= (\nu_{\Gc_n} \circ \ou^{-1} ) \circ \pi_{\Gc_n} \circ \mu_{G_k}
\end{split}
\end{equation*}

Now, if we employ the universal property of the product $(\prod \M, (\pi_{M_n} \mid n <\omega))$ with respect to the cone $(\prod \Gc_\omega, (\pi_{M_n} \circ e \circ \nu_{\Gc_n} \circ \ou^{-1} \circ \pi_{\Gc_n}  \mid n <\omega))$, then there exists a unique morphism $\alpha\in\Ac(\prod \Gc_\omega,\prod \M)$ such that the middle non-convex pentagon in the following diagram commutes :
\begin{equation*}
\xymatrix{
	G_k  \ar@{->}@/^2pc/[rr]^{\mu_{G_k}} \ar@{->}[r]^{ \nu_{G_k}} \ar@{=}[d]_{1_{G_k}} & \bigoplus \Gc_{\omega}   \ar@{->}[r]^{u} & \prod\Gc_{\omega} \ar@{->}[d]^{\pi_{\Gc_n}} \ar@{->}[dr]^{\pi_{M_n} \circ \alpha}  \ar@{~>}@/^1pc/[drr]^{\alpha} & & \bigoplus \M \ar@{->}[d]^{t}   & \\
	G_k \ar@{->}[r]_{ \onu_{G_k}} \ar@{=}[d]_{1_{G_k}} & \bigoplus \Gc_{n} \ar@{->}[u]_{\nu_{\Gc_n}} \ar@{->}[r]^{\ou}_{\simeq} \ar@{->}[d]^{\nu_{\Gc_{\omega}} \circ \nu_{\Gc_n} } & \prod \Gc_{n}  & M_n  &\prod \M \ar@{->}[d]^{t^c} \ar@{->}[l]_{\pi_{M_n}}&  \\
    G_k \ar@{->}[r]_{ \wnu_{G_k}} & \bigoplus \Gc \ar@{->}[urr]^{\pi_{M_n} \circ e} \ar@{->}@/_1pc/[urrr]^{e} & & &\Coker(t) \\ 
}
\end{equation*}
Then for each $k\le n$ we deduce that
\begin{equation*}
\begin{split}
\pi_{M_n} \circ (\alpha \circ \mu_{G_k} - e \circ \wnu_{G_k}) &= \pi_{M_n} \circ (\alpha \circ \mu_{G_k} - e \circ \nu_{\Gc_{\omega}} \circ \nu_{\Gc_n} \circ \ou^{-1} \circ \pi_{\Gc_n} \circ \mu_{G_k}) = \\
& = (\pi_{M_n} \circ  \alpha - \pi_{M_n} \circ e \circ \nu_{\Gc_{\omega}} \circ\nu_{\Gc_n} \circ \ou^{-1} \circ \pi_{\Gc_n})\circ \mu_{G_k} = 0
\end{split} 
\end{equation*}
then $\alpha \circ \mu_{G_n} = e \circ \wnu_{G_n}$ for every $n <\omega$ is yielded as the number $n$ was fixed. Note that $\prod \Gc_\omega$ is $\Cc$-compact by the hypothesis. Now, consider $f^c$ the cokernel of the morphism $f = t^c \circ \alpha \in \Ac \left( \prod \Gc_\omega, \Coker(t)\right)$.
Then 
\begin{equation*}
\begin{split}
0 &= f^c \circ t^c \circ (e \circ \wnu_{G_k}- \alpha \circ \mu_{G_n})= \\ 
&= f^c \circ t^c \circ e \circ \wnu_{G_n} - f^c \circ t^c \circ \alpha \circ \mu_{G_n} = f^c \circ e' \circ \wnu_{G_n}
\end{split}
\end{equation*}
hence $0 = f^c \circ e' \circ \wnu_{\Gc_n} = f^c \circ e' \circ \nu_{\Gc_\omega} \circ \nu_{\Gc_k}$ for every $n<\omega$, which finishes the proof.
\end{proof}

Let $\Ic\ne\emptyset$ be a system of subsets of a set $X$. We recall that $\Ic$ is said to be 
\begin{enumerate}
\item[--]\emph{an ideal} if it is closed under subsets (i.e. if $A \in \Ic$ and $B \subseteq A$, then $B \in \Ic$) and 
under finite unions, (i.e. if $A, B\in \Ic$ , then $A \cup B \in \Ic$),
\item[--] \emph{a prime ideal} if it is a proper ideal and for all subsets $A$, $B$ of $X$, $A \cap B \in \Ic$ implies that $A \in \Ic$ or $B \in \Ic$,
\item[--] \emph{a principal ideal} if there exists a set $Y\subseteq X$ such that $\Ic=P(Y)$.
\end{enumerate}
The set $\Ic \mid Y = \{ Y \cap A \mid A \in \Ic \}$ is called \emph{a trace} of on ideal $\Ic$ on $Y$.

Note that the trace of an ideal is also an ideal and that $\Ic$ is a prime ideal if and only if for every $A \subseteq X$, $A \in \Ic$ or $X \setminus A \in \Ic$. 
Moreover, a principal prime ideal on $X$ is of the form $P(X\setminus\{x\})$ for some $x\in X$.

Dually, a system $\mathcal F\ne\emptyset$ of non-empty subsets of $X$ is said to be
\begin{enumerate}
\item[--] \emph{a filter} if it is closed under finite intersections and supersets,
\item[--] \emph{an ultrafilter} if it is a filter which is not properly contained in any other filter on $X$,
\end{enumerate}
We say that a filter $\mathcal F$ is {\it $\lambda$-complete}, if $\bigcap\mathcal G\in \mathcal F$ for every subsystem $\mathcal G\subseteq\mathcal F$ such that $|\mathcal G|<\lambda$
and $\mathcal F$ is {\it countably complete}, if it is $\omega_1$-complete.

Note that there is a one-to-one correspondence between ultrafilters and prime ideals on $X$ defined by $\Ic \mapsto \Pc(X)\setminus \Ic$ for an ideal $\Ic$.

Now, we are able to generalize \cite[Lemma 3.3]{KalZem2014}

\begin{prop}\label{PrimeIdealofAnnihSubfamofNonCompProd}  
Let $\Ac$ be a $\prod\Cc$-compactly generated category,
$\M$ a family of $\Cc$-compact objects of $\Ac$ and $\Nc=(N_n \mid n<\omega)$ a countable family of objects of $\Cc$. Suppose that $\Psi_\Nc$ is not surjective and fix $\varphi\in\Ac(\prod \M,\bigoplus\Nc)\setminus\Img\Psi_\Nc$. If we denote 
$\Ic_n = \{ \J \subseteq \M \mid \rho_{N_k} \circ \varphi \circ\mu_\J=0 \ \forall k\ge n\}$ and $\Ic = \bigcup_{n < \omega} \Ic_n \subseteq \Pc(\M)$, then the following holds:
\begin{enumerate}
\renewcommand{\labelenumi}{(\roman{enumi})}  
  \item $\Ic_n$ is an ideal for each $n$,
  \item $\Ic$ is closed under countable unions of subfamilies,
  \item there exists $n < \omega$ for which $\Ic = \Ic_n$,
  \item there exists a subfamily $\Uc \subseteq \M$ such that the trace of $\Ic$ on $\Uc$ forms a non-principal prime ideal.
\end{enumerate}
\end{prop}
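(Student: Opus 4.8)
The plan is to establish (i)--(iv) in order, since each later part uses the earlier ones. Two elementary facts about the associated morphisms will be used repeatedly. First, for nested subfamilies $\J'\subseteq\J\subseteq\M$ one has $\mu_{\J'}=\mu_\J\circ\mu_{\J'}^{\J}$, where $\mu_{\J'}^{\J}\in\Ac(\prod\J',\prod\J)$ is the associated morphism of $\prod\J'$ inside $\prod\J$; this follows from the uniqueness clause of Lemma~\ref{AssMorphAbelianCats}(i) applied inside $\prod\J$. Second, for a disjoint decomposition $\J=\J_1\sqcup\J_2$ the object $\prod\J$ is the finite biproduct $\prod\J_1\times\prod\J_2$ by Lemma~\ref{StructAssandCtPMorphCommuteAbelianCats}(iii), whence $1_{\prod\J}=\mu_{\J_1}^{\J}\circ\pi_{\J_1}^{\J}+\mu_{\J_2}^{\J}\circ\pi_{\J_2}^{\J}$. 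For (i), closure of $\Ic_n$ under subfamilies is immediate from the first fact: if $\J\in\Ic_n$ and $\J'\subseteq\J$ then $\rho_{N_k}\circ\varphi\circ\mu_{\J'}=(\rho_{N_k}\circ\varphi\circ\mu_\J)\circ\mu_{\J'}^{\J}=0$ for $k\ge n$. For finite unions, after replacing $\J_2$ by $\J_2\setminus\J_1\in\Ic_n$ it suffices to treat a disjoint union; composing $\rho_{N_k}\circ\varphi\circ\mu_{\J_1\sqcup\J_2}$ with the biproduct identity and using $\mu_{\J_i}=\mu_{\J_1\sqcup\J_2}\circ\mu_{\J_i}^{\J_1\sqcup\J_2}$ splits it into the summands $\rho_{N_k}\circ\varphi\circ\mu_{\J_i}\circ\pi_{\J_i}^{\J_1\sqcup\J_2}$, both zero for $k\ge n$. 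Thus each $\Ic_n$ is an ideal, and $\Ic=\bigcup_n\Ic_n$ is an increasing union of ideals, hence itself closed under finite unions.

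The crux is (ii). Given countably many $\J_m\in\Ic$ I may take them increasing and pass to the disjoint pieces $D_m=\J_m\setminus\J_{m-1}\in\Ic$, so $\J=\bigsqcup_m D_m$ and, by Lemma~\ref{StructAssandCtPMorphCommuteAbelianCats}(iii), $\prod\J\cong\prod_m P_m$ with $P_m=\prod D_m$ a countable family. Under this identification write $\psi=\varphi\circ\mu_\J$ and $\psi_{km}=\rho_{N_k}\circ\psi\circ\mu_{P_m}=\rho_{N_k}\circ\varphi\circ\mu_{D_m}$; then $D_m\in\Ic$ says precisely that each column $(\psi_{km})_k$ has finite support. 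Let $t\colon\bigoplus_m P_m\to\prod_m P_m$ be the compatible morphism; by Lemma~\ref{CokCtPMorphCountIsComp} its cokernel $Q=\Coker(t)$ is $\Cc$-compact, so, as $\Nc\subseteq\Cc$, every morphism $Q\to\bigoplus\Nc$ is finitely supported. Supposing $\J\notin\Ic$, the set $T=\{k:\rho_{N_k}\circ\psi\ne 0\}$ is infinite. For any $k$ with $\psi_{km}=0$ for all $m$ we have $(\rho_{N_k}\circ\psi)\circ t=0$, so $\rho_{N_k}\circ\psi$ factors through $Q$; assembling these factorizations over the indices $k\in T$ with all $\psi_{km}=0$, when they are infinitely many, yields after descent along $t^c$ a morphism $Q\to\bigoplus\Nc$ with infinitely many nonzero components, contradicting $\Cc$-compactness of $Q$. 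The genuinely delicate case is when almost every active $k\in T$ is already witnessed by some nonzero entry $\psi_{km}$: then the tail of $\psi$ is carried by the coproduct-to-product map and cannot be descended to $Q$ directly. I expect to handle this exactly as in the proof of Lemma~\ref{CokCtPMorphCountIsComp}, using a generating family $\Gc$ with $\Cc$-compact products, an epimorphism onto $\prod_m P_m$, and the universal properties to build from the growing entries a morphism out of a $\Cc$-compact object that again violates Theorem~\ref{e.c.}. This coherent case is the main obstacle.

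Assertion (iii) follows from (ii): the $\Ic_n$ increase, and if the chain did not stabilize I could pick $\J_i\in\Ic\setminus\Ic_i$ and $k_i\ge i$ with $\rho_{N_{k_i}}\circ\varphi\circ\mu_{\J_i}\ne 0$; then $\J=\bigcup_i\J_i\in\Ic$ by (ii), so $\J\in\Ic_n$ for some $n$, and for $k_i\ge n$ the factorization $\mu_{\J_i}=\mu_\J\circ\mu_{\J_i}^{\J}$ gives $\rho_{N_{k_i}}\circ\varphi\circ\mu_{\J_i}=0$, a contradiction. For (iv) I first note $\Ic$ is proper: if $\M\in\Ic$ then, since $\mu_\M=1_{\prod\M}$, $\varphi$ would be finitely supported and lie in $\Img\Psi_\Nc$. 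Next, no prime trace can be principal: if $\Ic\mid\Uc=\Pc(\Uc\setminus\{x\})$ then $\Uc\setminus\{x\}\in\Ic$ while $\Uc\notin\Ic$, yet the biproduct identity shows the tail of $\varphi\circ\mu_\Uc$ then factors through the single $\Cc$-compact object $M_x$, whose $\Cc$-compactness forces that tail finitely supported, i.e. $\Uc\in\Ic$ -- a contradiction. Hence every prime trace of $\Ic$ is automatically non-principal, and it remains to exhibit $\Uc\notin\Ic$ on which $\Ic\mid\Uc$ is prime, i.e. an $\Ic$-indecomposable positive set. Such a set I would extract by a splitting argument: were every positive set divisible into two positive sets, iterating produces infinitely many pairwise disjoint positive $B_m$, and the morphism induced on $\prod\bigsqcup_m B_m$ should, through the countable completeness of (ii) together with Lemma~\ref{CokCtPMorphCountIsComp}, contradict $\Cc$-compactness; making this final extraction airtight is the secondary difficulty.
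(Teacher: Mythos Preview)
Parts (i) and (iii) are correct and coincide with the paper's arguments.

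The substantive gap is that you never invoke the generating set $\Gc$ with $\Cc$-compact products directly; you only use it through Lemma~\ref{CokCtPMorphCountIsComp}, and that is not enough. This is precisely what leaves your ``delicate case'' in (ii) and your ``secondary difficulty'' in (iv) unresolved.

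For (ii), the paper does not split into your Cases A and B. It proves \emph{first} that the disjoint pieces $D_m$ all lie in a single $\Ic_n$. If not, then for every $n$ there exist $m(n)$ and $l(n)\ge n$ with $\rho_{N_{l(n)}}\circ\varphi\circ\mu_{D_{m(n)}}\ne 0$; pick a generator $G_n\in\Gc$ and $f_n\colon G_n\to\prod D_{m(n)}$ so that the composite stays nonzero, and assemble the $f_n$ via the product into a single morphism out of $\prod_n G_n$. By hypothesis $\prod_n G_n$ is $\Cc$-compact, so its composite into $\bigoplus\Nc$ has finite support, contradicting $l(n)\to\infty$. Only after this uniform bound does the paper pass to $\Coker(t)$: with all $D_m\in\Ic_n$ one has $\varphi\circ\mu_\J\circ t=0$, hence a factorization through the $\Cc$-compact $\Coker(t)$, and the conclusion follows. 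Your Case~B is exactly the obstruction this preliminary step removes, and it cannot be handled ``as in the proof of Lemma~\ref{CokCtPMorphCountIsComp}'' without reproducing this generator argument; your Case~A is then subsumed and the split becomes unnecessary.

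For (iv), your proposed route through ``the countable completeness of (ii) together with Lemma~\ref{CokCtPMorphCountIsComp}'' does not apply: assertion (ii) is about countable unions of families \emph{in} $\Ic$, whereas the pairwise disjoint $B_m$ you produce are all \emph{outside} $\Ic$, so (ii) says nothing about $\bigsqcup_m B_m$. The paper instead reruns the generator argument: from each $B_m\notin\Ic$ extract $k_m>m$, a generator $G_m\in\Gc$, and $f_m\colon G_m\to\prod B_m$ with $\rho_{N_{k_m}}\circ\varphi\circ\mu_{B_m}\circ f_m\ne 0$; then $\Cc$-compactness of $\prod_m G_m$ forces almost all these composites to vanish, a contradiction. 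Your non-principality argument is correct and matches the paper's.
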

\begin{proof} 
Let $\Gc$ be a set of $\Cc$-compact objects satisfying that every product of a system of objects in $\Gc$ is $\Cc$-compact, which is guaranteed by the hypothesis.

(i) Obviously, $\emptyset\in \Ic_n$ and $\Ic_n$ is closed under subsets.
The closure of $\Ic_n$ under finite unions follows from Lemma~\ref{StructAssandCtPMorphCommuteAbelianCats}(iii) applied on the disjoint decomposition $\J\cup \K=\J\cup (\K\setminus \J)$,
i.e. from the canonical isomorphism $\prod\J\cup \K \cong \prod\J\times \prod\K \setminus \J$.

(ii) First we show that $\Ic$ is closed under countable unions of pairwise disjoint sets. Let $\K_j, j<\omega$ be pair-wisely disjoint subfamilies of $\Ic$ and put $\K=\bigcupdot_{j < \omega} \K_j$. Let $K_i: =\prod \K_i$. We show that there exists $k < \omega$ such that $\K_j \in \Ic_k$ for each $j < \omega$. Assume that for all $n < \omega$ there exist possibly distinct $i(n)$ such that $\K_{i(n)} \notin \Ic_n$. Hence $\rho_{N_l(n)} \circ \varphi \circ \mu_{\K_{i(n)}} \ne 0$ for some $l(n) \geq n$ and there is a $\Cc$-compact generator
$G_n\in\Gc$ and a morphism $f_n\in\Ac(G_n, K_{i(n)})$ with $\rho_{N_{l(n)}} \circ \varphi \circ \mu_{\K_{i(n)}} \circ f_n \ne 0$. Set $\K':= ( K_{i(n)} \mid n < \omega)$, $\wK:= ( K_{n} \mid n < \omega)$. 

Put $\Gc_{\omega} := ( G_j \mid j < \omega)$ and denote by $(\prod\Gc_{\omega},(\pi_{G_j} \mid j < \omega))$ the product of $\Gc_{\omega}$ and by $\mu_{G_j} \in \Ac\left( G_j, \prod\Gc_{\omega} \right)$, $j<\omega$, the associated morphisms given by Lemma~\ref{AssMorphAbelianCats}(i). Then the universal property of the product $\prod \K'$ applied to the constructed cone gives us a morphism $f \in \Ac \left(\prod \Gc_{\omega}, \prod \K' \right)$ such that $f_n \circ \pi_{G_n}=\pi_{K_{i(n)}} \circ f$, hence
\begin{equation*}
f_n = f_n \circ \pi_{G_n} \circ \mu_{G_n} = \pi_{K_{i(n)}} \circ f \circ \mu_{G_n} = \pi_{K_{i(n)}} \circ \mu_{K_{i(n)}} \circ f_n 
\end{equation*}
Since $\prod\Gc_{\omega}$ is $\Cc$-compact by the hypothesis there exists arbitrarily large $m < \omega$ such that $\rho_{N_{l(m)}} \circ \varphi \circ \mu_{\K'} \circ f=0$ where $\mu_{\K'} \in \Ac \left (\prod\K', \prod\M \right)$ is the associated morphism to $\pi_{\K'} \in \Ac(\prod\M, \prod\K')$ over the subcoproduct of $\K'$. Hence 
\begin{equation*}
\rho_{N_{l(m)}} \circ \varphi \circ (\mu_{K_{i(m)}} \circ f_m)=\rho_{N_{l(m)}} \circ \varphi\circ \mu_{\K'} \circ f \circ \mu_{G_m} = 0,
\end{equation*}
a contradiction.

We have proved that there is some $n< \omega$ such that $\rho_  {N_k} \circ \varphi \circ \mu_{\K_j}=0$ for each $k\ge n$ and $j<\omega$, without loss of generality we may suppose that $n=0$. Denote by $t^c$ the cokernel of the compatible coproduct-to-product morphism $t \in \Ac \left( \bigoplus\K, \prod\K \right)$. 
As $\varphi \circ \mu_{\K} \circ t=0$, the universal property of the cokernel ensures the existence of the morphism $\tau\in\Ac(\Coker(t),\bigoplus\N)$ such that $\varphi \circ \mu_\K=\tau \circ t^c $. Hence there exists $n< \omega$ such that $\rho_{N_k} \circ \varphi \circ \mu_{\K}=0$ for each $k\ge n$ since $\Coker(t)$ is $\Cc$-compact by Lemma~\ref{CokCtPMorphCountIsComp}, which proves that $\K\subseteq\Ic_n$.

To prove the claim for whatever system $(\J_j \mid j < \omega)$ in $\Ic$ is chosen, it remains to put $\J_0 = \K_0$ and $\J_i = \K_i \setminus \bigcup_{j < i} \K_j$ for $i > 0$.

(iii) Assume that $\Ic \ne \Ic_j$ for every $j < \omega$. Then there exists a countable sequence of families of objects $(\J_j \in \Ic \setminus \Ic_j \mid j \in \omega)$. By (ii) we get $\J:= \bigcup_{j<\omega} \J_j \in \Ic$ and there is some $n < \omega$ such that $\J \in \Ic_n$. Having $\J_n \subseteq \J \in \Ic_n$ leads us to a contradiction. 

(iv) We will show that there exists a family $\Uc \subseteq \M$ such that for every $\K \subseteq \Uc, \K \in \Ic$ or $\Uc \setminus \K \in \Ic$. Assume that such $\Uc$ does not exist. Then we may construct a countably infinite sequence of disjoint families $(\K_i \mid i < \omega)$ where $\K_i$ are non-empty for $i>0$ in the following way: Put $\K_0 = \emptyset$ and $\J_0=\M$. There exist disjoint sets $\J_{i+1}, \K_{i+1} \subset \J_i$ such that $\J_i=\J_{i+1}\cup \K_{i+1}$ where $\J_{i+1}, \K_{i+1}\not \in \Ic$. Now, for each $n \geq 1$ there exists a compact generator $G_n\in\Gc$ and a morphism $f_n\in\Ac(G_n,\prod\K_n)$ such that $\rho_{N_k} \circ \varphi \circ \mu_{\K_n} \circ f_n \ne 0$ for some $k>n$ which contradicts to the fact that $\prod_{n < \omega} G_n$ is $\Cc$-compact (hence $\rho_{N_k} \circ \varphi \circ \mu_{\K_n} \circ f_n \circ \pi_n = 0$ starting from some large enough $k < \omega$).

The trace of $\Ic$ on $\Uc$ is a prime ideal and assume that it  is principal, i.e. it consists of all subfamilies of $\Uc$ excluding one particular index $U \in \Uc$, so $\Ic \mid \Uc = \Pc(\Uc \backslash \{ U\})  \in \Ic$. On the other hand, $U$ is $\Cc$-compact itself, which implies $\{U\} \in \Ic$. This yields $\Ic \mid \Uc$ containing $\Uc$, a contradiction. 
\end{proof}

As a consequence of Proposition \ref{PrimeIdealofAnnihSubfamofNonCompProd} we can formulate
a generalization of \cite[Theorem 3.4]{KalZem2014}:

\begin{cor}\label{CompGenCatsClosedUnderCountProds}
Let $\Ac$ be a $\prod\Cc$-compactly generated category.
Then the following holds:
\begin{enumerate}
\renewcommand{\labelenumi}{(\roman{enumi})}  
  \item A product of countably many $\Cc$-compact objects is $\Cc$-compact.
  \item If there exists a system $\M$ of cardinality $\kappa$ of $\Cc$-compact objects such that the product $\prod \M$ is not $\Cc$-compact, then there exists an uncountable cardinal $\lambda < \kappa$ and a countable complete nonprincipal ultrafilter on $\lambda$.
\end{enumerate}
\end{cor}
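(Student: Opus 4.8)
The plan is to read both statements off Theorem~\ref{e.c.} together with Proposition~\ref{PrimeIdealofAnnihSubfamofNonCompProd}; once the ideal $\Ic$ and the subfamily $\Uc$ furnished by that Proposition are in hand, everything that remains is set-theoretic bookkeeping. In both parts I would argue by contradiction, assuming the relevant product $\prod\M$ is not $\Cc$-compact. By Theorem~\ref{e.c.}(ii) this yields a countable system $\Nc=(N_n\mid n<\omega)$ of objects of $\Cc$ and a morphism $\varphi\in\Ac(\prod\M,\bigoplus\Nc)$ with $\rho_{N}\circ\varphi\ne 0$ for every $N\in\Nc$; equivalently $\varphi\notin\Img\Psi_\Nc$, so $\Psi_\Nc$ is not surjective and the hypotheses of Proposition~\ref{PrimeIdealofAnnihSubfamofNonCompProd} are satisfied. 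I would then fix the ideals $\Ic_n$, their union $\Ic$, and the subfamily $\Uc\subseteq\M$ whose trace $\Ic\mid\Uc$ is a nonprincipal prime ideal, as supplied by that Proposition.

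For (i) the decisive point is that every singleton lies in $\Ic$: if $U\in\M$ then $U$ is $\Cc$-compact, so $\varphi\circ\mu_{\{U\}}\in\Ac(U,\bigoplus\Nc)$ factors through a finite subcoproduct, whence $\rho_{N_k}\circ\varphi\circ\mu_{\{U\}}=0$ for all large $k$ and $\{U\}\in\Ic$ (this is precisely the observation used in the last paragraph of the proof of Proposition~\ref{PrimeIdealofAnnihSubfamofNonCompProd}(iv)). When $\M$ is countable the subfamily $\Uc$ is countable as well, so $\Uc=\bigcup_{U\in\Uc}\{U\}$ is a countable union of members of $\Ic$. By Proposition~\ref{PrimeIdealofAnnihSubfamofNonCompProd}(ii) the ideal $\Ic$ is closed under countable unions, hence $\Uc\in\Ic$ and therefore $\Uc\in\Ic\mid\Uc$. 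This contradicts the fact that a prime ideal is proper, and the contradiction forces $\prod\M$ to be $\Cc$-compact.

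For (ii) I would keep the same setup with $|\M|=\kappa$ and put $\lambda=|\Uc|\le\kappa$. First, $\lambda$ must be uncountable: were $\Uc$ countable, the computation of part (i) would again give $\Uc\in\Ic\mid\Uc$, contradicting properness of the prime ideal $\Ic\mid\Uc$. Next, invoking the one-to-one correspondence between prime ideals and ultrafilters recorded just before the Proposition, I would set $\Fc=\Pc(\Uc)\setminus(\Ic\mid\Uc)$. Since $\Ic\mid\Uc$ is nonprincipal, $\Fc$ is a \emph{free} (nonprincipal) ultrafilter; and since $\Ic$, hence its trace, is closed under countable unions, $\Fc$ is closed under countable intersections, i.e. $\omega_1$-complete, which is exactly countable completeness. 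Transporting along a bijection $\Uc\to\lambda$ then produces a countably complete nonprincipal ultrafilter on $\lambda$, with $\lambda\le\kappa$ uncountable, as wanted.

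I expect no serious obstacle inside the corollary itself, since the entire conceptual weight is carried by Proposition~\ref{PrimeIdealofAnnihSubfamofNonCompProd}; the two places that demand genuine attention are the verification that every singleton belongs to $\Ic$ (this is exactly where $\Cc$-compactness of the individual factors is consumed) and the correct dualization of ``closed under countable unions'' on the ideal side into ``countably complete'' on the filter side. One should also record that the construction places the resulting ultrafilter on $\Uc$, of cardinality $\lambda\le\kappa$, so the cardinal bound is read off directly from the inclusion $\Uc\subseteq\M$.
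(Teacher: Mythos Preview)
Your argument is correct and matches the paper's approach: both parts are read off Proposition~\ref{PrimeIdealofAnnihSubfamofNonCompProd}, and your treatment of (ii) is identical to the paper's (dualize the trace ideal $\Ic\mid\Uc$ to a countably complete nonprincipal ultrafilter and invoke (i) for uncountability). For (i) the paper tersely cites Proposition~\ref{PrimeIdealofAnnihSubfamofNonCompProd}(iii) while you go through (ii) and (iv); your detour through $\Uc$ is harmless but not needed, since already $\M=\bigcup_{M\in\M}\{M\}\in\Ic$ by closure under countable unions, forcing $\rho_{N_k}\circ\varphi=0$ for all large $k$ and contradicting the choice of $\varphi$ directly.
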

\begin{proof} 
(i) An immediate consequence of Proposition \ref{PrimeIdealofAnnihSubfamofNonCompProd}(iii).

(ii) Let $\M$ be a system of cardinality $\kappa$ of $\Cc$-compact objects and suppose that $\prod \M$ is not a $\Cc$-compact object. Then there exists a countable family $\Nc$ such that $\Psi_{\Nc}$ is not surjective. By  Lemma~\ref{PrimeIdealofAnnihSubfamofNonCompProd}(iv) there exists a subfamily $\Uc \subseteq \M$ such that the trace of $\Ic$ on $\Uc$ forms a non-principal prime ideal which is closed under countable unions of families by Lemma~\ref{PrimeIdealofAnnihSubfamofNonCompProd}(ii). If we define $\Vc = \Pc(\Uc)  \setminus (\Ic \mid \Uc )$ then $\Vc$ forms a countable complete non-principal ultrafilter on $\Uc$. It is uncountable by applying (i).
\end{proof}

Before we formulate the main result of this section which answers the question from \cite{ElbKepNem2003} in Abelian categories, let us list several set-theoretical notions and their properties guaranteeing that the hypothesis of the theorem is consistent with ZFC.

A cardinal number $\lambda$ is said to be {\it measurable} if there exists a $\lambda$-complete non-principal ultrafilter on $\lambda$ and it is {\it Ulam-measurable} if there exists a countably complete non-principal ultrafilter on $\lambda$. A regular cardinal $\kappa$ is {\it strongly inaccessible} if $2^\lambda<\kappa$ for each $\lambda<\kappa$.
Recall that

	\begin{enumerate}
		\item[$\bullet$]\cite[Theorem 2.43.]{Zel2011} every Ulam-measurable cardinal is greater or equal to the first measurable cardinal;
		\item[$\bullet$]\cite[Theorem 2.44.]{Zel2011} every measurable cardinal is strongly inaccessible;
		\item[$\bullet$]\cite[Corollary IV.6.9]{Kun80} it is consistent with ZFC that there is no strongly inaccessible cardinal.
	\end{enumerate}

\begin{thm}\label{CompGenCatsClosedUnderProds}
Let $\Ac$ be a $\prod\Cc$-compactly generated category,
$\M$ a family of $\Cc$-compact objects of $\Ac$.
If we assume that there is no strongly inaccessible cardinal, then every product of $\Cc$-compact objects is $\Cc$-compact.
\end{thm}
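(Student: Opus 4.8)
The plan is to argue by contradiction and reduce the assertion to the recalled set-theoretic dichotomy, the categorical content having already been absorbed into Corollary~\ref{CompGenCatsClosedUnderCountProds}. Suppose, towards a contradiction, that there exists a family $\M$ of $\Cc$-compact objects whose product $\prod\M$ fails to be $\Cc$-compact, and set $\kappa=|\M|$. The point of departure is Corollary~\ref{CompGenCatsClosedUnderCountProds}(ii), which is designed exactly for this situation: from the non-$\Cc$-compactness of $\prod\M$ it extracts an uncountable cardinal $\lambda<\kappa$ carrying a countably complete non-principal ultrafilter.

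Next I would perform the set-theoretic translation. By the very definition recalled before the statement, the existence of such an ultrafilter says precisely that $\lambda$ is Ulam-measurable. I would then invoke the first listed fact from \cite{Zel2011}, that every Ulam-measurable cardinal is at least the first measurable cardinal: this forces a measurable cardinal $\mu\le\lambda$ to exist. Applying the second listed fact, that every measurable cardinal is strongly inaccessible, I conclude that $\mu$ is strongly inaccessible. This contradicts the standing hypothesis that no strongly inaccessible cardinal exists, and the theorem follows.

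I do not expect a genuine obstacle here, since the heavy lifting was carried out in Proposition~\ref{PrimeIdealofAnnihSubfamofNonCompProd} and Corollary~\ref{CompGenCatsClosedUnderCountProds}; what remains is bookkeeping. The only subtlety worth flagging is that the corollary delivers merely \emph{countable} completeness (not full $\lambda$-completeness) of the ultrafilter on a cardinal $\lambda$ strictly below $\kappa$, so one must be careful to route the argument through \emph{Ulam}-measurability rather than measurability. It is exactly the chain ``Ulam-measurable $\Rightarrow$ a measurable cardinal exists $\Rightarrow$ a strongly inaccessible cardinal exists'' that bridges the gap, and confirming that this weaker completeness already suffices to activate the first cited theorem is the single place requiring attention.
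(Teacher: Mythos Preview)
Your proposal is correct and matches the paper's own proof essentially line for line: assume a non-$\Cc$-compact product, invoke Corollary~\ref{CompGenCatsClosedUnderCountProds}(ii) to obtain a countably complete non-principal ultrafilter on some uncountable $\lambda$, and then run the chain Ulam-measurable $\Rightarrow$ measurable $\mu\le\lambda$ exists $\Rightarrow$ $\mu$ is strongly inaccessible to reach the contradiction. Your closing remark about routing through Ulam-measurability (rather than measurability of $\lambda$ itself) is exactly the point the paper's two cited facts from \cite{Zel2011} are there to handle.
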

\begin{proof}
Suppose that the product of an uncountable system of $\Cc$-compact objects is not $\Cc$-compact. Then Corollary~\ref{CompGenCatsClosedUnderCountProds}(ii) ensures the existence of a countable complete ultrafilter on $\lambda$. Thus there exists a measurable cardinal $\mu\le\lambda$, which is necessarily strongly inaccessible.
\end{proof}

\def\bibname{Bibliography}

\end{document}